\documentclass{birkjour}
\usepackage{amssymb}
\usepackage{stmaryrd}
\usepackage{mathtools}
\newtheorem{thm}{Theorem}[section]
\newtheorem{cor}[thm]{Corollary}
\newtheorem{lem}[thm]{Lemma}

\theoremstyle{definition}
\newtheorem{defn}[thm]{Definition}
\theoremstyle{remark}
\newtheorem{rem}[thm]{Remark}
\newtheorem{ex}[thm]{Example}
\numberwithin{equation}{section}

\begin{document}
	
	%-------------------------------------------------------------------------
	% editorial commands: to be inserted by the editorial office
	%
	%\firstpage{1} \volume{228} \Copyrightyear{2004} \DOI{003-0001}
	%
	%
	%\seriesextra{Just an add-on}
	%\seriesextraline{This is the Concrete Title of this Book\br H.E. R and S.T.C. W, Eds.}
	%
	% for journals:
	%
	%\firstpage{1}
	%\issuenumber{1}
	%\Volumeandyear{1 (2004)}
	%\Copyrightyear{2004}
	%\DOI{003-xxxx-y}
	%\Signet
	%\commby{inhouse}
	%\submitted{March 14, 2003}
	%\received{March 16, 2000}
	%\revised{June 1, 2000}
	%\accepted{July 22, 2000}
	%
	%
	%
	%---------------------------------------------------------------------------
	%Insert here the title, affiliations and abstract:
	%

\title[\(\mathcal F\)-Transitivity on Metric Trees]
{\(\mathcal F\)-Transitivity of Translation Semigroups on Directed Metric Trees}

	%----------Author 1
	\author[Chen]{Xiang Chen}
	
	\address{School of Mathematics\\
		Tianjin University\\
		Tianjin 300350\\
		P.R. China}
	
	\email{2020233018@tju.edu.cn}
	
	\thanks{Corresponding author: Xiang Chen. The work was supported in part by the National Natural Science Foundation of China (Grant Nos. 12571088) and the Natural Science Foundation of Henan (No. 262300421862).}
	
	%----------Author 2
	\author[Zhang]{Li Zhang}
	
	\address{School of Mathematics and Statistics\\
		Nanyang Normal University\\
		Nanyang 473061\\
		P.R. China}
	
	\email{zhangli0977@126.com}
	
	%----------Author 3
	\author[Zhou]{Zehua Zhou}
	
	\address{School of Mathematics\\
		Tianjin University\\
		Tianjin 300350\\
		P.R. China}
	
	\email{zehuazhoumath@aliyun.com; zhzhou@tju.edu.cn}
	
	%----------classification, keywords, date
	\subjclass{Primary 47A16; Secondary 47D06}
	
	\keywords{Directed metric tree, translation semigroup, Furstenberg family, \(\mathcal F\)-transitivity}
	
	\date{}
	%----------additions
	%\dedicatory{aaaaaaa}
	%%% ----------------------------------------------------------------------

\begin{abstract}
	We study the \(\mathcal F\)-transitivity and topological
	\(\mathcal F\)-recurrence of left translation semigroups on weighted
	\(L^p\)-spaces over directed metric trees. Motivated by the recent work
	of Mangino and Vargas-Moreno on hypercyclicity and weak mixing for these
	semigroups, we investigate the different problem of
	\(\mathcal F\)-transitivity, where the entire return-time set is required
	to belong to a prescribed finitely invariant Furstenberg family. Assuming
	that the weight is \(p\)-admissible, we obtain necessary and sufficient
	integral conditions for both rooted and rootless trees, and establish the
	equivalence between \(\mathcal F\)-transitivity and topological
	\(\mathcal F\)-recurrence. In the rooted case, the criteria depend only on
	the weights along descendant edges. In the rootless case, an additional
	ancestor term and a measurable cancellation construction reflect the
	backward geometry of the tree. Our approach is based on measurable
	weighted minimization and quantitative estimates over subsets of the edge
	parameter interval. Examples on homogeneous rooted trees and rootless
	trees with a free left end show that branching may determine the dynamics,
	but forward branching alone does not guarantee
	\(\mathcal F\)-transitivity in the rootless setting.
\end{abstract}

%%% ----------------------------------------------------------------------
\maketitle

\section{Introduction}

A central problem in linear dynamics is to understand how linear operators
can exhibit complicated orbit behavior, including hypercyclicity, weak
mixing, and mixing; see \cite{CFS,CC}. Furstenberg families provide a unified
return-time framework for several of these properties. This framework has been used to study strong transitivity properties of
operators \cite{KR2}. It has also been applied to disjoint
\(\mathcal F\)-transitive and \(\mathcal F\)-transitive families of
composition operators on \(L^p\)-spaces \cite{Cx3}. More recently,
disjoint \(\mathcal F\)-transitivity and topological multiple recurrence
of weighted composition operators on \(H(\mathbb D)\) were investigated
in \cite{CRS1}.

The dynamics of strongly continuous semigroups forms another important part
of the subject. Frequently hypercyclic semigroups were studied in
\cite{EPA}, while specification properties for \(C_0\)-semigroups were
considered in \cite{SBF}. Among the basic examples are translation
semigroups. Their supercyclic behavior on complex sectors was investigated
in \cite{YXZ}, and their \(\mathcal F\)-transitivity and recurrence on such
domains were characterized in \cite{HXM}. Translation semigroups on locally
compact groups were studied in \cite{AAN}. These results indicate that the
dynamics of a translation semigroup depends strongly on both the geometry
of the underlying space and the behavior of the weight.

Directed trees provide a natural framework in which branching becomes part
of the dynamics. Weighted shifts on directed trees were systematically
studied in \cite{KGD}. Further dynamical properties, including \(\mathcal F\)-transitivity,
were investigated in \cite{CRR}, while chaotic weighted shifts were
considered in \cite{KDP}. Related operators on function spaces over trees
have also attracted attention. Forward and backward shifts on Hardy spaces
were studied in \cite{ARMA}, hypercyclic weighted backward shifts on the
little Hardy and little Lipschitz spaces in \cite{Ka3}, and composition
operators on the little Lipschitz space of a  tree in \cite{CMA}.

A directed metric tree is obtained by identifying each edge of a directed
tree with a copy of the unit interval. It therefore combines the discrete
branching structure of a tree with continuous translation along its edges.
Recently, Mangino and Vargas-Moreno \cite{MV} introduced left translation
semigroups on weighted \(L^p\)-spaces over directed metric trees. They
characterized the strong continuity of these semigroups and studied their
hypercyclicity and weak mixing in both the rooted and rootless settings.
Their dynamical conditions are formulated in terms of pointwise extremal
behavior of the weights along descendant and ancestor branches.

The problem considered here is different in scope. While \cite{MV}
characterizes hypercyclicity and weak mixing, \(\mathcal F\)-transitivity
requires the entire return-time set \(N(U,V)\) to belong to a fixed
finitely invariant Furstenberg family \(\mathcal F\). To control these
return-time sets, our criteria involve integrating the pointwise weighted
minimum over measurable subsets of the edge parameter. This integral
arises naturally from the \(L^p\)-norm and provides a quantitative
condition on each candidate return time. For
\(\mathcal F=\mathcal F_\infty\), the family of unbounded subsets of
\(\mathbb R_+\), our criterion gives an alternative characterization of
hypercyclicity and therefore agrees with \cite{MV} at the level of the
dynamical property. For other finitely invariant Furstenberg families,
the same framework captures finer return-time properties not treated in
\cite{MV}.

Our contributions are threefold. First, we establish a measurable integral
form of the weighted minimization formula for finite or countable families
of weights. Second, for every finitely invariant Furstenberg family
\(\mathcal F\), we characterize both \(\mathcal F\)-transitivity and
topological \(\mathcal F\)-recurrence of translation semigroups on rooted
directed metric trees, and prove that these two properties are equivalent.
Third, under the same assumption on \(\mathcal F\), we obtain analogous
characterizations for rootless directed metric trees, where an additional
ancestor term and a measurable cancellation construction are needed. For
appropriate standard choices of \(\mathcal F\), our criteria specialize to
characterizations of topological transitivity, weak mixing, and topological
mixing.

\section{Preliminaries}
\subsection{Directed metric trees}

In this subsection, following \cite{MV}, we recall the basic notation and
introduce the directed metric tree that will be used throughout the paper.

\begin{defn}
	A \emph{directed tree} is a directed graph \(G=(V,E)\) satisfying:
	\begin{enumerate}
		\item[\textnormal{(i)}] \(G\) is connected;
		\item[\textnormal{(ii)}] \(G\) contains no cycles;
		\item[\textnormal{(iii)}] every vertex \(v\in V\) has at most one parent;
		that is, there exists at most one \(u\in V\) such that \((u,v)\in E\);
		\item[\textnormal{(iv)}] The set \(V\) is countable.
	\end{enumerate}
\end{defn}
If \((u,v)\in E\), then \(u\) is called the \emph{parent} of \(v\), and \(v\)
a \emph{child} of \(u\). For the directed edge \(e=(u,v)\), the vertex \(u\)
is called the \emph{tail} of \(e\), while \(v\) is called its \emph{head}.
A vertex without children is called a \emph{leaf}, and a vertex without a
parent a \emph{root}. The tree is called \emph{rooted} if it has a root, and
\emph{rootless} otherwise.

The associated \emph{directed metric tree} \(L(G)\) is obtained by replacing
each edge \(e=(u,v)\in E\) with a copy of the unit interval \([0,1)\), whose
coordinate increases from the tail \(u\) to the head \(v\); thus, \(u\) and
\(v\) correspond to \(0\) and \(1\), respectively.

Following the notation of \cite[Section~1.3]{MV}, write
\[
E=\{e_i:i\in I\},
\]
where \(I\) is countable. For \(i\in I\), define \(M_0(i)=\{i\}\) and  for $n\in\mathbb N_0$,
\[
M_{n+1}(i)
=
\left\{
j\in I:
\text{the head of }e_k\text{ is the tail of }e_j
\text{ for some }k\in M_n(i)
\right\}.
\]
Equivalently, \(j\in M_n(i)\) if and only if there exists a sequence
\[
e_i=e_{i_0},e_{i_1},\ldots,e_{i_n}=e_j
\]
such that the head of \(e_{i_{r-1}}\) is the tail of \(e_{i_r}\) for
\(r=1,\ldots,n\). Moreover,
\[
M_n(i)\cap M_n(k)=\varnothing
\qquad
(i\neq k,\ n\in\mathbb N_0).
\]

A function \(f\) on \(L(G)\) will be identified with the family $
f=(f_i)_{i\in I}$,
where \(f_i:[0,1)\to\mathbb K\) denotes the restriction of \(f\) to the edge
\(e_i\), and \(\mathbb K=\mathbb R\) or \(\mathbb C\).

Let \(\rho=(\rho_i)_{i\in I}\) be a family of weights such that
\[
\rho_i\in L^1([0,1)),
\qquad
\rho_i(s)>0 \quad \text{for every }s\in[0,1).
\]
For \(1\leq p<\infty\), define
\[
L^p_{\rho_i}[0,1)
=
\left\{
h:[0,1)\to\mathbb K:
h \text{ is measurable and }
\int_0^1 |h(s)|^p\rho_i(s)\,ds<\infty
\right\},
\]
with norm
\[
\|h\|_{p,\rho_i}
=
\left(
\int_0^1 |h(s)|^p\rho_i(s)\,ds
\right)^{1/p}.
\]
The weighted \(L^p\)-space on the directed metric tree is defined by
\[
L^p_\rho(L(G))
=
\left\{
f=(f_i)_{i\in I}:
f_i\in L^p_{\rho_i}[0,1)\ \text{for every }i\in I,\ 
\sum_{i\in I}\|f_i\|_{p,\rho_i}^p<\infty
\right\}.
\]
It is a Banach space with norm
\[
\|f\|_{p,\rho}
=
\biggl(
\sum_{i\in I}\|f_i\|_{p,\rho_i}^p
\biggl)^{1/p}.
\]
Moreover, since \(I\) is countable and each \(L^p_{\rho_i}[0,1)\) is separable
for \(1\leq p<\infty\), the space \(L^p_\rho(L(G))\), being the
\(\ell^p\)-sum of the spaces \(L^p_{\rho_i}[0,1)\), is separable.

We shall also use the finite-edge-supported subspace
\[
\bigoplus_{i\in I}L^p_{\rho_i}[0,1)
=
\left\{
f=(f_i)_{i\in I}\in L^p_\rho(L(G)):
\operatorname{supp}f:=\{i\in I:f_i\neq0\}
\text{ is finite}
\right\}.
\]
Since \(C_c(0,1)\) is dense in \(L^p_{\rho_i}[0,1)\) for every
\(i\in I\), the algebraic direct sum
\(
\bigoplus_{i\in I}C_c(0,1)
\)
is dense in \(L^p_\rho(L(G))\).
\subsection{Left translation semigroups on directed metric trees}
\begin{defn}
	Let \(X\) be a Banach space. A one-parameter family
	\((T_t)_{t\geq0}\) is called a
	\emph{strongly continuous semigroup}, or a \(C_0\)-semigroup, on \(X\) if the
	following conditions hold:
	\begin{enumerate}
		\renewcommand{\labelenumi}{\textnormal{(\roman{enumi})}}
		\item \(T_0=I\);
		\item \(T_{t+s}=T_tT_s\) for all \(s,t\geq0\);
		\item $
		\lim_{s\to t}\|T_sx-T_tx\|=0 $
		for every \(x\in X\) and every \(t\geq0\).
	\end{enumerate}
\end{defn}
Following \cite{MV}, for \(t\geq0\) and \(s\in[0,1)\), set
\[
n(t,s)=\lfloor t+s\rfloor,
\qquad
\sigma(t,s)=t+s-n(t,s)\in[0,1),
\]
where \(\lfloor x\rfloor\) denotes the greatest integer less than or
equal to \(x\).

For \(t\geq0\), the left translation  \(T_t\) is formally given by
\[
(T_tf)_i(s)
=
\sum_{j\in M_{n(t,s)}(i)}
f_j\bigl(\sigma(t,s)\bigr),
\qquad i\in I,
\]
for almost every \(s\in[0,1)\), where \(f=(f_i)_{i\in I} \in L^p_\rho(L(G))\) and an empty
sum is understood to be zero. Since directed edge-paths in a tree are
unique, this formula is equivalent to the matrix formulation in
\cite{MV}. 

We shall use the following sufficient condition, which is the implication
\((ii)\Rightarrow(i)\) in \cite[Proposition~2.2]{MV}, stated in our
notation.

\begin{lem}\label{lem2.2}
	Let \(1\leq p<\infty\), let \(\rho=(\rho_i)_{i\in I}\) be a weight
	on \(L(G)\), and let \(M\geq1\) and \(\omega\in\mathbb R\). Assume
	that, for every \(i\in I\), \(t\geq0\), and almost every
	\(s\in[0,1)\), one of the following conditions holds:
	\[
	\rho_i(s)
	\leq
	Me^{\omega t}
	\inf_{j\in M_{n(t,s)}(i)}
	\rho_j\bigl(\sigma(t,s)\bigr),
	\qquad p=1,
	\]
	or
	\[
	\biggl(
	\sum_{j\in M_{n(t,s)}(i)}
	\frac{1}
	{\rho_j\bigl(\sigma(t,s)\bigr)^{1/(p-1)}}
	\biggr)^{p-1}
	\leq
	M^pe^{p\omega t}\frac{1}{\rho_i(s)},
	\qquad 1<p<\infty.
	\]
	Then the above formulas define a strongly continuous semigroup
	\((T_t)_{t\geq0}\) of bounded linear operators on
	\(L_\rho^p(L(G))\), and
	\[
	\|T_t\|\leq Me^{\omega t},
	\qquad t\geq0.
	\]
\end{lem}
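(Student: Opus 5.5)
We prove Lemma~\ref{lem2.2} in three stages: a pointwise estimate, then the semigroup law, then strong continuity. Throughout we use the substitution \(u=\sigma(t,s)\), splitting \([0,1)\) at \(s_0=\lceil t\rceil-t\) into two pieces. On each piece \(n(t,s)\) is constant, and \(s\mapsto\sigma(t,s)\) is a translation, hence measure preserving.

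\textbf{Step 1 (boundedness).} Fix \(i\in I\) and \(t\geq0\), and write \(n=n(t,s)\), \(\sigma=\sigma(t,s)\).

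For \(1<p<\infty\) we apply H\"older's inequality to the finite or countable sum, writing
\[
f_j(\sigma)=\bigl(f_j(\sigma)\rho_j(\sigma)^{1/p}\bigr)\rho_j(\sigma)^{-1/p}.
\]
This gives
\[
\Bigl|\sum_{j\in M_n(i)}f_j(\sigma)\Bigr|^p\leq\Bigl(\sum_{j\in M_n(i)}|f_j(\sigma)|^p\rho_j(\sigma)\Bigr)\Bigl(\sum_{j\in M_n(i)}\rho_j(\sigma)^{-1/(p-1)}\Bigr)^{p-1}.
\]
By the hypothesis, the last factor is at most \(M^pe^{p\omega t}/\rho_i(s)\). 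Therefore
\[
|(T_tf)_i(s)|^p\rho_i(s)\leq M^pe^{p\omega t}\sum_{j\in M_{n(t,s)}(i)}|f_j(\sigma(t,s))|^p\rho_j(\sigma(t,s))
\]
for almost every \(s\). For \(p=1\) the same inequality follows directly from
\[
\Bigl|\sum_j f_j\Bigr|\rho_i\leq\sum_j|f_j|\rho_i\leq Me^{\omega t}\sum_j|f_j|\rho_j.
\]

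Next we integrate over \(s\in[0,1)\) and sum over \(i\). Each piece \(\{s: n(t,s)=m\}\) is mapped by \(\sigma\) bijectively onto a subinterval of \([0,1)\). For fixed \(m\), the sets \(M_m(i)\), \(i\in I\), are pairwise disjoint. For \(t\notin\mathbb N_0\), the two pieces correspond to the two consecutive values \(m=\lfloor t\rfloor\) and \(m=\lfloor t\rfloor+1\), and their images are complementary intervals \([t-\lfloor t\rfloor,1)\) and \([0,t-\lfloor t\rfloor)\). Consequently each pair \((j,u)\) with \(j\in I\), \(u\in[0,1)\) is counted at most once. Hence
\[
\|T_tf\|^p\leq M^pe^{p\omega t}\|f\|^p.
\]
The same computation applied to \(|f|\) shows that the defining series converges absolutely almost everywhere, so \(T_t\) is well defined.

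\textbf{Step 2 (semigroup law).} Clearly \(T_0=I\) because \(M_0(i)=\{i\}\). For \(T_{t+r}=T_tT_r\) we use the path structure. A point at parameter \(s\) on edge \(i\) moved forward by \(t\) lands on edges \(j\in M_{n(t,s)}(i)\) at \(\sigma(t,s)\). Moving further by \(r\) lands on \(M_{n(r,\sigma(t,s))}(j)\).

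We need two identities. The first is
\[
n(t,s)+n(r,\sigma(t,s))=n(t+r,s),\qquad \sigma(r,\sigma(t,s))=\sigma(t+r,s),
\]
which is elementary floor arithmetic. The second is
\[
\bigsqcup_{j\in M_a(i)}M_b(j)=M_{a+b}(i),
\]
a disjoint union because paths in a tree are unique. We verify both first on the dense subspace \(\bigoplus_i C_c(0,1)\), where all sums are finite, and then extend by the boundedness from Step 1.

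\textbf{Step 3 (strong continuity).} This is the main obstacle. By the bound \(\|T_t\|\leq Me^{\omega t}\), it suffices to prove \(T_tf\to f\) as \(t\to0^+\) for \(f\) in the dense set \(\bigoplus_i C_c(0,1)\). Continuity at an arbitrary \(t\) then follows from the semigroup law together with local boundedness, by the standard argument.

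Take \(f\) supported on finitely many edges, with each \(f_j\) having support in \([\delta,1-\delta]\). For \(0<t<\delta\), the only \(s\) with \(n(t,s)=1\) satisfy \(s\geq1-t>1-\delta\), and there \(f_j(\sigma(t,s))=0\) since \(\sigma(t,s)<\delta\). Hence \((T_tf)_i(s)=f_i(s+t)\) for all \(s\), and \((T_tf)_i\) vanishes off finitely many edges \(i\).

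So we need \(\int_0^1|f_i(s+t)-f_i(s)|^p\rho_i(s)\,ds\to0\). This follows by dominated convergence: the integrand tends to \(0\) pointwise by continuity of \(f_i\), and it is dominated by \(2^p\|f_i\|_\infty^p\rho_i\), which is integrable since \(\rho_i\in L^1\). This completes the plan.
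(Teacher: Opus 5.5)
Your proof is correct and complete. Step 1 gives $\|T_t\|\le Me^{\omega t}$ from the pointwise weighted H\"older bound, the splitting of $[0,1)$ at $1-\tau$ (on each piece $n(t,\cdot)$ is constant and $\sigma(t,\cdot)$ is a translation), and the pairwise disjointness of the sets $M_n(i)$, $i\in I$, for fixed $n$. Step 2 gets the semigroup law from the floor identities and $M_{a+b}(i)=\bigsqcup_{j\in M_a(i)}M_b(j)$. Step 3 gets strong continuity from density of $\bigoplus_{i\in I}C_c(0,1)$, local boundedness, and dominated convergence, which uses $\rho_i\in L^1$.

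The paper itself gives no proof of this lemma. It is quoted as the implication (ii)$\Rightarrow$(i) of \cite[Proposition~2.2]{MV}, restated in the paper's notation. So your argument is a self-contained replacement rather than a variant of something in the text. It fits the paper well: your Step~1 is the same mechanism the authors use later in the necessity parts of Theorems~\ref{thm3.1} and~\ref{thm3.5}. There they combine the H\"older estimate with the reciprocal sum, the decomposition \eqref{3.1}, the changes of variables $u=s+\tau$ and $u=s+\tau-1$, and disjointness of the $M_n(i)$. The only difference is that the admissibility bound appears where they have $\Phi_i$. What the citation supplies beyond your argument is the converse half of \cite[Proposition~2.2]{MV}, that the weight inequality is also necessary. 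The paper never uses that half.

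Two cosmetic points are worth tidying.
\begin{enumerate}
\item In Step~3, the formula $(T_tf)_i(s)=f_i(s+t)$ for $s\ge 1-t$ presupposes that $f_i$ is extended by zero outside $[0,1)$; say so explicitly.
\item In Step~1, it is cleaner to record two facts before the norm estimate. First, $T_t$ respects a.e.-equivalence: $\sigma(t,\cdot)$ is piecewise a translation, so null sets pull back to null sets, and only countably many $j$ are involved. Second, the series converges absolutely a.e., by your H\"older bound applied to $|f_j|$ together with $\rho_i>0$. The hypothesis also forces $\sum_j\rho_j(\sigma(t,s))^{-1/(p-1)}<\infty$ for a.e.\ $s$, so the H\"older step never meets an infinite factor.
\end{enumerate}
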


\begin{defn}\label{def:p-admissible}
	Following \cite[Definition~2.4]{MV}, a weight \(\rho\) is called
	\emph{\(p\)-admissible} if there exist \(M\geq1\) and
	\(\omega\in\mathbb R\) such that the corresponding inequality in
	Lemma~\ref{lem2.2} holds for every \(i\in I\), \(t\geq0\), and
	\(s\in[0,1)\).
\end{defn}

\begin{rem}\label{rem:p-admissible}
	By \cite[Remark~2.5]{MV}, every \(p\)-admissible weight is bounded away
	from zero on each edge; namely,
	\[
	\inf_{s\in[0,1)}\rho_i(s)>0,
	\qquad i\in I.
	\]
\end{rem}
\begin{defn}
	A nonempty collection \(\mathcal F\) of subsets of \(\mathbb R_+\) is called a
	\emph{Furstenberg family} on \(\mathbb R_+\) if it is hereditary upward, that is,
	whenever \(A\in\mathcal F\) and \(A\subset B\subset \mathbb R_+\), one has
	\(B\in\mathcal F\). 
\end{defn}

\begin{defn}
	Let \(X\) be a topological vector space and
	\((T_t)_{t\geq0}\) a linear semigroup on \(X\). For nonempty open sets
	\(U,V\subset X\), define
	\[
	N(U,V)=\{t\geq0:T_t(U)\cap V\neq\varnothing\}.
	\]
	Given a Furstenberg family \(\mathcal F\) on \(\mathbb R_+\), the semigroup
	\((T_t)_{t\geq0}\) is called \emph{\(\mathcal F\)-transitive} if
	\(
	N(U,V)\in\mathcal F
	\)
	for all nonempty open sets \(U,V\subset X\), and
topologically	\emph{ \(\mathcal F\)-recurrent} if
	\(
	N(U,U)\in\mathcal F
	\)
	for every nonempty open set \(U\subset X\).
\end{defn}

\begin{rem}\label{rem:classical-families}
	A Furstenberg family \(\mathcal F\) on \(\mathbb R_+\) is called
	\emph{proper} if
	\[
	\mathcal F\neq\varnothing
	\qquad\text{and}\qquad
	\varnothing\notin\mathcal F,
	\]
	and \emph{finite invariant} if
	\[
	A\in\mathcal F,\ R>0
	\quad\Longrightarrow\quad
	A\cap(R,\infty)\in\mathcal F.
	\]
	Throughout this paper, all Furstenberg families are assumed to have these
	properties. 
\end{rem}

\section{\(\mathcal F\)-Transitivity of Translation Semigroups}
We begin with the following measurable version of the weighted minimization
formula, whose proof is based on an adaptation of the argument in
\cite[Theorem~4.3]{KGD}. It will be used in the construction of measurable
perturbation functions.
\begin{lem}\label{lem3.1}
Let \(1\leq p<\infty\), let \(E\subset\mathbb R\) be a Lebesgue
measurable set, let \(J\) be a nonempty finite or countable set, and let
\((\omega_j)_{j\in J}\) be a family of measurable functions from \(E\)
into \((0,\infty)\).
	Let \(h\) be a measurable function on \(E\). Assume that there exists
	\(j_0\in J\) such that
	\[
	|h|^p\omega_{j_0}\in L^1(E).
	\]
	For \(s\in E\), set
\[
\Theta_p(s)
=
\begin{cases}
	\displaystyle
	\inf_{j\in J}\omega_j(s), & p=1,\\[2mm]
	\displaystyle
	\biggl(
	\sum_{j\in J}\omega_j(s)^{-1/(p-1)}
	\biggr)^{1-p}, & 1<p<\infty,
\end{cases}
\]
where \((+\infty)^{1-p}=0\) when \(1<p<\infty\).
 Then the function \(\Theta_p\) is measurable. Moreover, for every \(\delta>0\),
there exist measurable functions
	$
	v_j:E\to[0,\infty),
	\;\; j\in J$
	such that
	\[
	\sum_{j\in J}v_j(s)=1
	\quad\text{for every }s\in E,
	\]
	and
	\[
	\int_E |h(s)|^p
	\sum_{j\in J}|v_j(s)|^p\omega_j(s)\,ds
	\leq
	\int_E |h(s)|^p\Theta_p(s)\,ds+\delta .
	\]
\end{lem}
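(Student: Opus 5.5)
We first dispose of measurability of \(\Theta_p\). For \(p=1\), \(\Theta_1=\inf_{j\in J}\omega_j\) is a countable infimum of measurable functions, hence measurable. For \(1<p<\infty\), each \(\omega_j^{-1/(p-1)}\) is a measurable positive function, so the series \(S(s)=\sum_{j}\omega_j(s)^{-1/(p-1)}\) is a measurable map into \((0,\infty]\). Hence \(\Theta_p=S^{1-p}\) is measurable, with the convention \((+\infty)^{1-p}=0\). Note also that \(0\le\Theta_p\le\omega_{j_0}\) pointwise, so \(|h|^p\Theta_p\in L^1(E)\).

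\textbf{The case \(1<p<\infty\).} The idea is to mimic the finite-dimensional minimization: under \(\sum_j v_j=1\), the quantity \(\sum_j v_j^p\omega_j\) is minimized by \(v_j=\omega_j^{-1/(p-1)}/S\). That minimizer gives exactly \(\Theta_p\) when \(S<\infty\) (Hölder / Lagrange). Where \(S(s)=\infty\) the normalization fails, so we truncate. Enumerate \(J=\{j_0,j_1,j_2,\dots\}\) starting with \(j_0\), and let \(S_N=\sum_{k\le N}\omega_{j_k}^{-1/(p-1)}\). Define
\[
v^{(N)}_{j_k}=\omega_{j_k}^{-1/(p-1)}/S_N \quad (k\le N), \qquad v^{(N)}_j=0 \ \text{otherwise}.
\]
These are measurable, sum to \(1\), and a direct computation gives \(\sum_j (v^{(N)}_j)^p\omega_j=S_N^{1-p}\). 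Now \(S_N^{1-p}\) decreases to \(S^{1-p}=\Theta_p\) pointwise, including the case \(S=\infty\), and it is dominated by \(S_0^{1-p}=\omega_{j_0}\). Since \(|h|^p\omega_{j_0}\in L^1\), dominated convergence gives
\[
\int_E|h|^pS_N^{1-p}\to\int_E|h|^p\Theta_p,
\]
so some \(N\) yields the \(\delta\)-estimate. If \(J\) is finite, \(N=|J|-1\) works exactly.

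\textbf{The case \(p=1\).} Here the minimizer puts mass on an index attaining the infimum, which need not exist, so we use a measurable near-minimizer selection. Set \(m_N=\min_{k\le N}\omega_{j_k}\), and let \(k_N(s)\) be the least \(k\le N\) attaining this minimum. The sets \(\{k_N=k\}\) are measurable, being finite Boolean combinations of sets \(\{\omega_{j_k}<\omega_{j_l}\}\) and \(\{\omega_{j_k}\le\omega_{j_l}\}\). Put \(v_j=1\) if \(j=j_{k_N(s)}\) and \(v_j=0\) otherwise. Then \(\sum_j v_j^p\omega_j=m_N\), which decreases to \(\Theta_1\) and is dominated by \(\omega_{j_0}\). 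Dominated convergence again gives the estimate for large \(N\).

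The main obstacle is handling the points where \(S=\infty\), or where the infimum is not attained, while keeping the exact constraint \(\sum_j v_j=1\) and measurability. The finite truncation starting at \(j_0\) resolves both: it preserves the constraint exactly, and it produces a monotone sequence dominated by the integrable majorant \(|h|^p\omega_{j_0}\), which is precisely the role of the hypothesis on \(j_0\).
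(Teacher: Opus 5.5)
Your proof is correct and follows essentially the same route as the paper. Both enumerate $J$ starting at $j_0$ and truncate. For $1<p<\infty$ both use the normalized weights $\omega_j^{-1/(p-1)}/S_N$, and for $p=1$ both take the indicator of the first index attaining the minimum. Both then conclude by dominated convergence with the integrable majorant $|h|^p\omega_{j_0}$, with the finite case handled exactly.
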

\begin{proof}
	The measurability of \(\Theta_p\) follows immediately from that of
	the functions \(\omega_j\), since countable infima and countable sums
	of nonnegative measurable functions are measurable.
	
	\medskip
	\noindent\textit{Case 1.}
	We first consider the case \(p=1\). Enumerate \(J\) as
	\[
	J=\{j_1,j_2,\ldots\},
	\]
	with \(j_1=j_0\); if \(J\) is finite, the enumeration terminates. For each
	admissible \(N\), set
	\[
	m_N(s)=\min_{1\leq k\leq N}\omega_{j_k}(s),
	\qquad s\in E.
	\]
	Then \(m_N\) is measurable. For \(1\leq k\leq N\), define
\[
A_{k,N}
=
\left\{
s\in E:
\omega_{j_k}(s)=m_N(s)
\ \text{and}\
\omega_{j_r}(s)>m_N(s)
\text{ for every }1\leq r<k
\right\}.
\]
	Each \(A_{k,N}\) is measurable, and the sets
	\(A_{1,N},\ldots,A_{N,N}\) form a measurable partition of \(E\).
	Define
	\[
	v_{j_k,N}=\chi_{A_{k,N}},
	\qquad 1\leq k\leq N,
	\]
	and set \(v_{j,N}=0\) for all remaining \(j\in J\). Then
	\[
	\sum_{j\in J}v_{j,N}(s)=1
	\quad\text{for every }s\in E,
	\]
	and
	\[
	\sum_{j\in J}v_{j,N}(s)\omega_j(s)=m_N(s)
	\quad\text{for every }s\in E.
	\]
	
	If \(J\) is finite, take \(N=|J|\). Then
	\(
	m_N(s)=\inf_{j\in J}\omega_j(s)=\Theta_1(s),
	\)
	and the desired conclusion follows with equality.
	
	Now assume that \(J\) is countably infinite. Then
	\[
	m_N(s)\downarrow \inf_{j\in J}\omega_j(s)=\Theta_1(s)
	\quad\text{as }N\to\infty.
	\]
	Moreover, since \(j_1=j_0\),
	\[
	0\leq |h(s)|m_N(s)
	\leq |h(s)|\omega_{j_0}(s),
	\]
	and the right-hand side belongs to \(L^1(E)\). Hence, by the dominated
	convergence theorem,
	\[
	\int_E |h(s)|m_N(s)\,ds
	\longrightarrow
	\int_E |h(s)|\Theta_1(s)\,ds.
	\]
	Thus, for the given \(\delta>0\), there exists \(N\in\mathbb N\) such that
	\[
	\int_E |h(s)|m_N(s)\,ds
	\leq
	\int_E |h(s)|\Theta_1(s)\,ds+\delta.
	\]
	Fix such an \(N\) and set \(v_j=v_{j,N}\). This proves the case \(p=1\).
	
	\medskip
\noindent\textit{Case 2.}	
Let \(1<p<\infty\). If \(J\) is finite, put
	\(
	S(s)=\sum_{j\in J}\omega_j(s)^{-1/(p-1)}
	\)
	and define
	\[
	v_j(s)
	=
	\frac{\omega_j(s)^{-1/(p-1)}}{S(s)},
	\qquad j\in J.
	\]
	Then each \(v_j\) is nonnegative and measurable,
	\[
	\sum_{j\in J}v_j(s)=1
	\quad\text{for every }s\in E,
	\]
	and
	\[
	\sum_{j\in J}|v_j(s)|^p\omega_j(s)
	=
	S(s)^{1-p}
	=
	\Theta_p(s).
	\]
	The desired conclusion therefore follows with equality.
	
	Suppose now that \(J\) is countably infinite. Choose an enumeration
	\[
	J=\{j_1,j_2,\ldots\}
	\]
	such that \(j_1=j_0\), and set
	\[
	S_N(s)=\sum_{m=1}^N\omega_{j_m}(s)^{-1/(p-1)}.
	\]
	Define
	\[
	v_{j_m,N}(s)
	=
	\begin{cases}
		\displaystyle
		\frac{\omega_{j_m}(s)^{-1/(p-1)}}{S_N(s)},
		&1\leq m\leq N,\\[3mm]
		0,&m>N.
	\end{cases}
	\]
	Then for every \(s\in E\), we obtain
	\[
	\sum_{j\in J}v_{j,N}(s)=1
	\quad
	\text{and}\quad
	\sum_{j\in J}|v_{j,N}(s)|^p\omega_j(s)
	=
	S_N(s)^{1-p}.
	\]
	
Let
\[
S(s)=\sum_{j\in J}\omega_j(s)^{-1/(p-1)}.
\]
Then \(S_N(s)\) increases to \(S(s)\), while \(S_N(s)^{1-p}\) decreases to
\(S(s)^{1-p}\) as \(N\) tends to infinity, with the convention
\(
(+\infty)^{1-p}=0.
\)  Moreover,
	\[
	0\leq |h(s)|^pS_N(s)^{1-p}
	\leq |h(s)|^p\omega_{j_0}(s).
	\]
	By the dominated convergence theorem,
	\[
	\int_E |h(s)|^pS_N(s)^{1-p}\,ds
	\longrightarrow
	\int_E |h(s)|^pS(s)^{1-p}\,ds.
	\]
	Therefore, for some \(N\in\mathbb N\),
	\[
	\int_E |h(s)|^pS_N(s)^{1-p}\,ds
	\leq
	\int_E |h(s)|^p 	\Theta_p(s)\,ds+\delta.
	\]
Fix such an \(N\) and set \(v_j=v_{j,N}\) for \(j\in J\). Then,
these functions satisfy the required normalization and integral estimate,
which completes the proof.
\end{proof}
The following lemma is a continuous-time analogue of
\cite[Proposition~4.5]{CRR}.

\begin{lem}\label{lem3.2}
	Let \(X\) be a Banach space and let \((T_t)_{t\geq0}\) be a strongly
	continuous semigroup on \(X\). Assume that
	\[
	X_0
	=
	\left\{
	x\in X:
	\lim_{t\to\infty}\|T_tx\|=0
	\right\}
	\]
	is dense in \(X\). If \((T_t)_{t\geq0}\) is topologically
	\(\mathcal F\)-recurrent, then it is \(\mathcal F\)-transitive.
\end{lem}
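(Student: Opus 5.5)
Given nonempty open sets \(U,V\subset X\), the plan is to find a single open set \(W\) and \(R>0\) with
\[
N(W,W)\cap(R,\infty)\subset N(U,V).
\]
Then topological \(\mathcal F\)-recurrence gives \(N(W,W)\in\mathcal F\). Finite invariance gives \(N(W,W)\cap(R,\infty)\in\mathcal F\), and the hereditary upward property gives \(N(U,V)\in\mathcal F\). The natural choice, following the discrete argument of \cite[Proposition~4.5]{CRR}, is a small ball around the sum of a point of \(U\) and a point of \(V\), both taken from \(X_0\).

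\emph{Step 1.} Using density of \(X_0\), pick \(x\in U\cap X_0\), \(y\in V\cap X_0\), and \(\varepsilon>0\) with \(B(x,3\varepsilon)\subset U\) and \(B(y,3\varepsilon)\subset V\). Set \(z=x+y\).

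\emph{Step 2.} Choose \(R>0\) with \(\|T_tx\|<\varepsilon\) and \(\|T_ty\|<\varepsilon\) for all \(t>R\).

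\emph{Step 3.} Here lies the main difficulty: in continuous time the operators \(T_t\) are not uniformly bounded as \(t\to\infty\), so one cannot simply fix a radius \(\eta\) working for every \(t\). I would try to avoid needing such control. Let \(W=B(z,\eta)\) with \(\eta\le\varepsilon\). For \(t\in N(W,W)\), \(t>R\), take \(w\in W\) with \(T_tw\in W\) and write \(w=z+d\) with \(\|d\|<\eta\).

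The first candidate is \(u=x+d\in B(x,\varepsilon)\subset U\). Then
\[
T_tu=T_tw-T_ty=z+d'-T_ty=y+(x+d'-T_ty),
\]
where \(\|d'\|<\eta\). The error term contains \(x\), which is not small, so this decomposition does not place \(T_tu\) in \(V\).

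The correct approach uses \(T_t w\) itself. Since \(T_tw\in W\), we have \(T_tw\approx x+y\). Also \(T_tw=T_tx+T_ty+T_td\), and \(T_tx,T_ty\) are small, so \(T_td\approx x+y\). Now take \(u=x+d\), which lies in \(U\) when \(\eta\le\varepsilon\). Then
\[
T_tu=T_tx+T_td=T_tx+(T_tw-T_tx-T_ty)=T_tw-T_ty,
\]
so
\[
\|T_tu-y\|\le\|T_tw-z\|+\|x\|+\|T_ty\|.
\]
This bound still contains \(\|x\|\), which again fails.

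Fix this by choosing \(u\) so that its image loses \(x\) but keeps \(y\). Take \(u=x+y'\), where \(y'\) is the part of \(d\) whose image lands near \(y\). Since \(T_td\approx x+y\), we cannot split \(d\) canonically, so instead use \(W=B(x,\eta)\cup\) nothing. The cleanest fix is to apply recurrence to the open set
\[
W=B(x,\eta)+B(y,\eta)\ \text{(i.e. }B(z,2\eta)),
\]
and to choose \(u=w-y\). Then \(\|u-x\|<2\eta\), so \(u\in U\) when \(2\eta\le\varepsilon\). Moreover
\[
T_tu=T_tw-T_ty,
\]
and
\[
\|T_tu-z\|<2\eta+\varepsilon.
\]
This places \(T_tu\) near \(x+y\), not near \(y\). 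Therefore the proposal is to run the argument with the roles of the sets adjusted: start from \(x\in U\cap X_0\) and \(y\in V\), and set \(z=x+y\), as before. Then apply recurrence to the pair \(W\subset U+V\)-type set, where \(U'=U-y\), a translate. The point to verify carefully is that \(N(W,W)\cap(R,\infty)\subset N(U,V)\) after this translation. I expect this bookkeeping, and not any estimate, to be the main obstacle. The estimates reduce to the triangle inequality together with \(\|T_tx\|,\|T_ty\|<\varepsilon\) for \(t>R\).
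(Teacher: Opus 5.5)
Your proposal does not yet contain a proof. You correctly diagnose that each of your attempts fails. But the final suggestion still takes \(x\in U\cap X_0\), sets \(z=x+y\), and applies recurrence to a set of ``\(U+V\) type'' near \(z\), leaving the inclusion \(N(W,W)\cap(R,\infty)\subset N(U,V)\) unverified. Set up this way, your bookkeeping cannot verify it. Suppose \(w\approx z\) and \(T_tw\approx z\), while \(T_tx,T_ty\approx0\). For \(u=aw+bx+cy\) one has \(u\approx(a+b)x+(a+c)y\) and \(T_tu\approx a(x+y)\). So \(u\approx x\) and \(T_tu\approx y\) would force \(a+b=1\), \(a+c=0\), \(a=0\) and \(a=1\) at once. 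This is the obstruction you hit twice: a recurrent point near \(x+y\) carries the unwanted \(x\) to time \(t\). Adding vectors of \(X_0\) cannot remove it, because they contribute only small terms at large times.

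The missing idea is to apply recurrence near a point of \(V\) alone, and to take the vanishing vector from the \emph{difference} set \(U-v\).
\begin{itemize}
\item Fix \(v\in V\); it need not lie in \(X_0\). By density, choose \(h\in X_0\cap(U-v)\), so \(h+v\in U\).
\item Choose \(r>0\) with \(B(h+v,r)\subset U\) and \(B(v,r)\subset V\), and put \(V'=B(v,r/2)\). Take \(R\) with \(\|T_th\|<r/2\) for \(t>R\).
\item For \(t\in N(V',V')\cap(R,\infty)\), pick \(w\in V'\) with \(T_tw\in V'\). Then \(h+w\in B(h+v,r/2)\subset U\), and \(\|T_t(h+w)-v\|\le\|T_th\|+\|T_tw-v\|<r\). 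Hence \(t\in N(U,V)\).
\end{itemize}
Recurrence, finite invariance and heredity then give \(N(U,V)\in\mathcal F\); this is exactly the paper's argument.

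Your remark about the translate \(U-y\) points this way. However, the \(X_0\)-vector must approximate \(u_0-v\) for some \(u_0\in U\), not a point of \(U\). The recurrent set must also be a neighborhood of \(v\), not of \(x+y\).

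Finally, your concern in Step~3 about the lack of a uniform bound on \(\|T_t\|\) is a red herring. Recurrence controls \(T_tw\) directly, and \(T_th\to0\) by the definition of \(X_0\), so no operator-norm estimate is needed.
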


\begin{proof}
	Let \(U,V\subset X\) be nonempty open sets and choose \(v\in V\).
	Since
	\[
	U-v:=\{u-v:u\in U\}
	\]
	is nonempty and open and \(X_0\) is dense in \(X\), choose
	\(
	y\in(U-v)\cap X_0.
	\)
	Thus \(y+v\in U\). By the continuity of addition, there exist a nonempty
	open neighborhood \(V'\) of \(v\) and a zero-neighborhood \(W\) such that
	\[
	y+V'\subset U
	\qquad\text{and}\qquad
	W+V'\subset V.
	\]
	
	Since \(y\in X_0\), there exists \(R>0\) such that
	\[
	T_ty\in W,
	\qquad t>R.
	\]
	By the topological \(\mathcal F\)-recurrence of the semigroup and the
	finite invariance of \(\mathcal F\),
	\[
	N(V',V')\cap(R,\infty)\in\mathcal F.
	\]
	For every \(t\in N(V',V')\cap(R,\infty)\), there exists \(z\in V'\)
	such that \(T_tz\in V'\). Hence
	\[
	y+z\in U
	\]
	and
	\[
	T_t(y+z)=T_ty+T_tz\in W+V'\subset V.
	\]
	Therefore,
	\[
	N(V',V')\cap(R,\infty)\subset N(U,V).
	\]
	Since \(\mathcal F\) is upward hereditary, \(N(U,V)\in\mathcal F\).
	Thus \((T_t)_{t\geq0}\) is \(\mathcal F\)-transitive.
\end{proof}
	For \(i\in I\), \(t\geq0\), and \(s\in[0,1)\), define
	\[
	\Phi_i(t,s)
	=
	\biggl(
	\sum_{j\in M_{n(t,s)}(i)}
	\frac{1}{\rho_j(t+s-n(t,s))^{1/(p-1)}}
	\biggr)^{1-p},
	\]
	where the sum is understood in the extended sense and \((+\infty)^{1-p}=0\).
	Since \(G\) has no leaves, \(M_{n(t,s)}(i)\neq\varnothing\) for every
	\(i\in I\), \(t\geq0\), and \(s\in[0,1)\).
	
	Whenever \(t\geq0\) is fixed, we write
	\[
	t=n_0+\tau,\qquad n_0=\lfloor t\rfloor,\qquad 0\leq\tau<1,
	\]
	so that
	\begin{equation}\label{3.1}
		n(t,s)
		=
		\begin{cases}
			n_0, & 0\leq s<1-\tau,\\
			n_0+1, & 1-\tau\leq s<1.
		\end{cases}
	\end{equation}
	Throughout this section, \(m\) denotes the Lebesgue measure on
	\(\mathbb R\).
\begin{thm}\label{thm3.1}
	Let $G$ be a rooted directed tree without leaves. Let $1<p<\infty$ and assume that
	$\rho=(\rho_i)_{i\in I}$ is a $p$-admissible weight so that the left translation
	semigroup $(T_t)_{t\geq 0}$ is a strongly continuous semigroup on
	$L^p_\rho(L(G))$.
	For a finite nonempty set \(K\subset I\), \(\varepsilon>0\), and
	\(0<\alpha<1\), define
	\[
	\mathcal N(K,\varepsilon,\alpha)
	=
	\Biggl\{
	t\geq0:
	\inf_{\substack{E\subset[0,1)\\ m(E)>\alpha}}
	\int_E\sum_{i\in K}\Phi_i(t,s)\,ds
	<\varepsilon
	\Biggr\},
	\]
	where the infimum is taken over all Lebesgue measurable subsets
	\(E\) of \([0,1)\).
	Then the following assertions are equivalent:
\begin{enumerate}
	\renewcommand{\labelenumi}{\textnormal{(\roman{enumi})}}
	\item $(T_t)_{t\geq 0}$ is $\mathcal F$-transitive on $L^p_\rho(L(G))$;
	\item for every finite nonempty set $K\subset I$, every $\varepsilon>0$ and
	every $0<\alpha<1$, one has
	\[
	\mathcal{N}(K,\varepsilon,\alpha)\in\mathcal F;
	\]
	\item \((T_t)_{t\geq0}\) is topologically \(\mathcal F\)-recurrent on $L^p_\rho(L(G))$.
\end{enumerate}
\end{thm}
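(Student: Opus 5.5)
We would prove (i)$\Rightarrow$(iii)$\Rightarrow$(ii)$\Rightarrow$(i). The first implication is immediate, since $N(U,U)$ is a special case of $N(U,V)$.

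\emph{(ii)$\Rightarrow$(i).} Reduce to $X_0$ being dense and use Lemma~\ref{lem3.2}, after first proving (ii)$\Rightarrow$(iii). Density of $X_0$ holds in the rooted case: for $f\in\bigoplus_{i}C_c(0,1)$ with finite support $K$, each $(T_tf)_i$ involves only $f_j$ with $j\in M_{n(t,s)}(i)$. Because the tree is rooted, an edge $j\in K$ lies in $M_n(i)$ only for $i$ an ancestor of $j$ at depth $n$, and there are finitely many such $i$. For $t$ larger than the maximal depth of $K$ plus one, $(T_tf)_i=0$ for all $i$, so $T_tf=0$ eventually. Hence it suffices to show (ii)$\Rightarrow$(iii).

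\emph{(ii)$\Rightarrow$(iii).} Let $U$ be a ball $B(f,\eta)$ with $f\in\bigoplus_{i\in K}C_c(0,1)$, where $K$ is finite and $|f_i|\le C$. Fix $t\in\mathcal N(K,\varepsilon,\alpha)\cap(R,\infty)$, with $R$ so large that $T_tf=0$. Take a set $E$ with $m(E)>\alpha$ and $\int_E\sum_{i\in K}\Phi_i(t,s)\,ds<\varepsilon$. We seek $g=f+u$ with $\|u\|$ small and $T_tu\approx f$, so that $T_tg\approx f$. For $i\in K$ and $s\in E$, we want to place a perturbation on the descendants $j\in M_{n(t,s)}(i)$ at position $\sigma(t,s)$ whose sum equals $f_i(s)$. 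Apply Lemma~\ref{lem3.1} on $E$ (split according to \eqref{3.1} into the two pieces where $n(t,s)=n_0$ or $n_0+1$) with $h=f_i$ and $\omega_j(s)=\rho_j(\sigma(t,s))$. This gives $v_j$ with $\sum_j v_j=1$ and cost $\int_E|f_i|^p\Phi_i\,ds+\delta\le C^p\varepsilon+\delta$, using $\Theta_p=\Phi_i$. Define $u_j(\sigma(t,s))=v_j(s)f_i(s)$; since $s\mapsto\sigma(t,s)$ is a measure-preserving bijection on each piece, $\|u\|^p$ is controlled by this cost. The sets $M_n(i)$ are disjoint for distinct $i$, so the perturbations do not interfere, and then $(T_tu)_i=f_i$ on $E$. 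Off $E$, of measure less than $1-\alpha$, the error is $\|f_i\chi_{[0,1)\setminus E}\|_{p,\rho_i}$. Because $\rho_i\in L^1$, absolute continuity makes this small uniformly over sets of measure $<1-\alpha$ once $\alpha$ is close to $1$. We must also handle the term $T_tf$ and any overlap of $u$ with edges in $K$. Choosing $R$ beyond the depth of $K$ makes descendants at distance $n\geq1$ disjoint from $K$ after finitely many levels, so both are harmless. Thus $\mathcal N(K,\varepsilon,\alpha)\cap(R,\infty)\subset N(U,U)$, and (iii) follows by finite invariance and heredity.

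\emph{(iii)$\Rightarrow$(ii).} This is the main obstacle. Given $K,\varepsilon,\alpha$, choose $f$ with $f_i=\chi_{[0,1)}$ for $i\in K$, or bounded below by $1$ there, with $f\in L^p_\rho$ since $\rho_i\in L^1$. Let $U=B(f,\eta)$. For $t\in N(U,U)$ with $t$ large, pick $g\in U$ with $T_tg\in U$. Write $r=g-f$, so $\|r\|<\eta$, and $T_tf=0$ for large $t$; then $\|(T_tr)_i-f_i\|_{p,\rho_i}<2\eta$ for $i\in K$. Remark~\ref{rem:p-admissible} gives $\rho_i\ge c>0$, so by Chebyshev the set $E=\{s:|(T_tr)_i(s)|\ge1/2\ \forall i\in K\}$ has measure $>\alpha$ once $\eta$ is small. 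On $E$, the pointwise Hölder inequality (the minimization identity behind Lemma~\ref{lem3.1}) gives
\[
\tfrac{1}{2^p}\Phi_i(t,s)\le\Bigl|\sum_{j\in M_{n(t,s)}(i)}r_j(\sigma(t,s))\Bigr|^p\Phi_i(t,s)\le\sum_{j}|r_j(\sigma(t,s))|^p\rho_j(\sigma(t,s)).
\]
Integrating over $E$, summing over $i\in K$ (using disjointness of the $M_n(i)$), and changing variables yields $\int_E\sum_{i\in K}\Phi_i\,ds\le 2^{p+1}\|r\|^p<2^{p+1}\eta^p<\varepsilon$. Hence $N(U,U)\cap(R,\infty)\subset\mathcal N(K,\varepsilon,\alpha)$, and heredity concludes. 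The delicate points are the change of variables across the two pieces in \eqref{3.1}, where each $s\mapsto\sigma$ is injective so each piece contributes at most $\|r\|^p$ (hence the factor $2$), and ensuring that $T_tf=0$ for large $t$ in the rooted setting.
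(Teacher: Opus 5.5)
Your proof is correct. It uses the same ingredients as the paper: the weighted H\"older estimate plus Chebyshev (via $\inf\rho_i>0$) for necessity; Lemma~\ref{lem3.1} on the two pieces of \eqref{3.1}, with the change of variables $w=\sigma(t,s)$, for sufficiency; and Lemma~\ref{lem3.2} together with eventual annihilation of finitely supported functions in the rooted case.

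What differs is how the implications are arranged. The paper proves (i)$\Leftrightarrow$(ii) directly and uses Lemma~\ref{lem3.2} only for (iii)$\Rightarrow$(i). You prove (ii)$\Leftrightarrow$(iii) directly and obtain (ii)$\Rightarrow$(i) by passing through (iii).

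For necessity, the paper uses the pair $B(0,\delta)$, $B(y_K,\delta)$ supplied by transitivity. You instead use a single ball around the indicator $y_K$ of the edges in $K$, and the fact that $T_ty_K=0$ for large $t$, so only the small difference $r=g-y_K$ matters. This is the device the paper itself uses in the rootless Theorem~\ref{thm3.5}. It gives the formally stronger (iii)$\Rightarrow$(ii) without going through (i).

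For sufficiency, your construction is the paper's with the target equal to the starting point ($g=f$), and with $h=f_i$ in Lemma~\ref{lem3.1} instead of $h\equiv1$ followed by multiplication by $g_i$. The paper's version handles an arbitrary pair $U,V$ at no extra cost and so needs no appeal to Lemma~\ref{lem3.2} in that direction; yours must invoke it.

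A few minor points, none of which affects the conclusion:
\begin{itemize}
\item Since $T_ty_K=0$, you actually have $\|T_tr-y_K\|_{p,\rho}<\eta$, not merely $<2\eta$.
\item $\sigma(t,\cdot)$ maps the two pieces of \eqref{3.1} onto the complementary intervals $[\tau,1)$ and $[0,\tau)$, so the change of variables gives the constant $2^p$ rather than $2^{p+1}$.
\item In (ii)$\Rightarrow$(iii), you should fix $\alpha$ (by absolute continuity) and $\varepsilon$ in terms of $f$ and $\eta$ before invoking (ii).
\end{itemize}
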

\begin{proof}
The implication \(\mathrm{(i)}\Rightarrow\mathrm{(iii)}\) follows immediately
from the definition of \(\mathcal F\)-transitivity. Conversely, since the
finite-edge-supported functions are dense in \(X=L^p_\rho(L(G))\) and are
eventually annihilated by the rooted left translation semigroup,
Lemma~\ref{lem3.2} gives \(\mathrm{(iii)}\Rightarrow\mathrm{(i)}\). Therefore, it remains to prove the equivalence between \(\mathrm{(i)}\) and
	\(\mathrm{(ii)}\).
	
	\medskip
	\noindent
	\(\mathrm{(i)}\Rightarrow\mathrm{(ii)}\). Fix a finite nonempty set
	\(K\subset I\), \(\varepsilon>0\), and \(0<\alpha<1\). For each \(i\in I\),
	let \(\xi_i\) denote the function that is identically equal to \(1\) on
	the edge \(e_i\) and equal to \(0\) on all other edges. Set
	\[
	y_K=\sum_{i\in K}\xi_i.
	\]
	Since \(K\) is finite and \(\rho_i\in L^1([0,1))\) for every \(i\in I\),
	we have \(y_K\in L_\rho^p(L(G))\).
	
	By the \(p\)-admissibility of \(\rho\), each \(\rho_i\) is bounded away from zero
	on \([0,1)\). Let
	\[
	c_K:=
	\min_{i\in K}
	\operatorname*{inf}_{s\in[0,1)}\rho_i(s)>0 .
	\]
Here and throughout, \(|K|:=\operatorname{card}(K)\) denotes the
cardinality of \(K\). Choose \(\delta>0\) so small that
\begin{equation}\label{3.0}
	\frac{2^p|K|\delta^p}{c_K}<1-\alpha
	\quad\text{and}\quad
	2^p\delta^p<\varepsilon.
\end{equation}
Let
\[
U=B(0,\delta),
\qquad
V=B(y_K,\delta),
\]
where \(B(x,r)\) denotes the open ball in \(L_\rho^p(L(G))\) centered at
\(x\) with radius \(r\). 
	By the \(\mathcal F\)-transitivity of \((T_t)_{t\geq0}\), we have
	\[
	N(U,V):=\{t\geq0:T_t(U)\cap V\neq\varnothing\}\in\mathcal F .
	\]
	
	We shall show that
	\[
	N(U,V)\subset \mathcal{N}(K,\varepsilon,\alpha).
	\]
	Fix \(t\in N(U,V)\). Then there exists \(f\in U\) such that \(T_tf\in V\). Thus
	\[
	\|f\|_{p,\rho}<\delta,
	\qquad
	\|T_tf-y_K\|_{p,\rho}<\delta .
	\]
	For each \(i\in K\), define
	\[
	E_{t,i}
	=
	\left\{
	s\in[0,1):
	\bigl|(T_tf)_i(s)-1\bigr|<\frac12
	\right\},
	\qquad
	E_t=\bigcap_{i\in K}E_{t,i}.
	\]
	If \(s\in E_{t,i}^c\), then
	\(
	\bigl|(T_tf)_i(s)-1\bigr|\geq \frac12 .
	\)
	Therefore
	\[
	\begin{aligned}
		\frac{c_K}{2^p}m(E_{t,i}^c)
		\leq
		\int_{E_{t,i}^c}
		\bigl|(T_tf)_i(s)-1\bigr|^p\rho_i(s)\,ds \leq
		\|T_tf-y_K\|_{p,\rho}^p
		<\delta^p .
	\end{aligned}
	\]
	Hence
	\[
	m(E_{t,i}^c)<\frac{2^p\delta^p}{c_K},
	\qquad i\in K.
	\]
Since each \(E_{t,i}\) is measurable, the set
\(
E_t
\)
is also measurable. It follows from \eqref{3.0} that
\[
m(E_t^c)
\leq
\sum_{i\in K}m(E_{t,i}^c)
<
\frac{2^p |K|\delta^p}{c_K}
<
1-\alpha.
\]
Consequently, \(m(E_t)>\alpha\).
	
	For $s\in E_t$ and $i\in K$, we have
	$
	|(T_t f)_i(s)|\geq \frac12$.
	By the Hölder estimate,
\[
\begin{aligned}
	&\biggl|
	\sum_{j\in M_{n(t,s)}(i)}
	f_j(t+s-n(t,s))
	\biggr|^p
	\Phi_i(t,s)  \\
	&\qquad\leq
	\sum_{j\in M_{n(t,s)}(i)}
	|f_j(t+s-n(t,s))|^p
	\rho_j(t+s-n(t,s)).
\end{aligned}
\]
Thus, for \(s\in E_t\),
\[
\Phi_i(t,s)
\leq
2^p
\sum_{j\in M_{n(t,s)}(i)}
|f_j(t+s-n(t,s))|^p
\rho_j(t+s-n(t,s)).
\]
Using the decomposition of \eqref{3.1}, we obtain
\[
\begin{aligned}
	&\int_{E_t}\sum_{i\in K}\Phi_i(t,s)\,ds \\
	&\leq
	2^p
	\sum_{i\in K}
	\int_{E_t\cap[0,1-\tau)}
	\sum_{j\in M_{n_0}(i)}
	|f_j(s+\tau)|^p\rho_j(s+\tau)\,ds  \\
	&\quad+
	2^p
	\sum_{i\in K}
	\int_{E_t\cap[1-\tau,1)}
	\sum_{j\in M_{n_0+1}(i)}
	|f_j(s+\tau-1)|^p\rho_j(s+\tau-1)\,ds .
\end{aligned}
\]
	
	By the changes of variables \(u=s+\tau\) in the first integral and
	\(u=s+\tau-1\) in the second integral, we get
	\[
	\begin{aligned}
		\int_{E_t}\sum_{i\in K}\Phi_i(t,s)\,ds
		&\leq
		2^p
		\sum_{i\in K}
		\sum_{j\in M_{n_0}(i)}
		\int_{\tau}^{1}
		|f_j(u)|^p\rho_j(u)\,du        \\
		&\quad+
		2^p
		\sum_{i\in K}
		\sum_{j\in M_{n_0+1}(i)}
		\int_{0}^{\tau}
		|f_j(u)|^p\rho_j(u)\,du .
	\end{aligned}
	\]
Since, for each fixed integer \(n\), the sets \(M_n(i)\), \(i\in K\),
are pairwise disjoint, we obtain
\[
\begin{aligned}
	\int_{E_t}\sum_{i\in K}\Phi_i(t,s)\,ds
	&\leq
	2^p\sum_{j\in I}\int_{\tau}^{1}
	|f_j(u)|^p\rho_j(u)\,du\\
	&\quad+
	2^p\sum_{j\in I}\int_{0}^{\tau}
	|f_j(u)|^p\rho_j(u)\,du\\
	&=
	2^p\sum_{j\in I}\int_0^1
	|f_j(u)|^p\rho_j(u)\,du\\
	&=
	2^p\|f\|_{p,\rho}^p.
\end{aligned}
\]
Consequently,
\[
\int_{E_t}\sum_{i\in K}\Phi_i(t,s)\,ds
<
2^p\delta^p
<
\varepsilon.
\]
	Therefore $t\in \mathcal N(K,\varepsilon,\alpha)$. Hence
	\(N(U,V)\subset \mathcal{N}(K,\varepsilon,\alpha).\)
	Since $N(U,V)\in\mathcal F$ and $\mathcal F$ is upward hereditary, we have $
	\mathcal{N}(K,\varepsilon,\alpha)\in\mathcal F$.
	
\medskip
\noindent
\(\mathrm{(ii)}\Rightarrow\mathrm{(i)}\). Let \(U,V\) be two nonempty open
subsets of \(L^p_\rho(L(G))\). Since \(
\bigoplus_{i\in I}C_c(0,1)
\) is dense in
\(L^p_\rho(L(G))\), we may choose finite-edge-supported functions
\(f,g\in \bigoplus_{i\in I}C_c(0,1)\) such that
\[
f\in U,\qquad g\in V,
\]
and \(g\neq0\). Put
$
K:=\operatorname{supp}g$.
Then \(K\) is finite and nonempty. Choose \(\eta>0\) such that
\[
B(f,\eta)\subset U,
\qquad
B(g,\eta)\subset V.
\]

Since \(g\) has finite support and each \(g_i\) is continuous, set
\[M_g=\max_{i\in K}\|g_i\|_\infty<\infty.
\]
By the absolute continuity of the integral, choose \(0<\alpha<1\) so close
to \(1\) that, for every measurable set \(E\subset[0,1)\) with
\(m(E)<1-\alpha\),
\begin{equation}\label{3.2}
	\sum_{i\in K}\int_E |g_i(s)|^p\rho_i(s)\,ds
	<
	\left(\frac{\eta}{2}\right)^p.
\end{equation}
Choose \(\varepsilon>0\) and \(\gamma>0\) such that
\begin{equation}\label{3.3}
	M_g^p\varepsilon
	<
	\left(\frac{\eta}{2}\right)^p,
	\qquad
	|K|M_g^p\gamma
	<
	\left(\frac{\eta}{2}\right)^p.
\end{equation}

By assumption,
\[
\mathcal N(K,\varepsilon,\alpha)\in\mathcal F .
\]
Since \(f\) has finite edge support and \(G\) is rooted, there exists \(R>0\)
such that
\[
T_tf=0,\qquad t> R .
\]
By the finite invariance of \(\mathcal F\),
$
A:=\mathcal N(K,\varepsilon,\alpha)\cap (R,\infty)\in\mathcal F$.

We claim that
\[
A\subset N(U,V).
\]
Let \(t\in A\), and write, as in \eqref{3.1},
\[
t=n_0+\tau,
\qquad
n_0=\lfloor t\rfloor,
\qquad
0\leq\tau<1.
\]
Since \(t\in\mathcal N(K,\varepsilon,\alpha)\), there exists a measurable
set \(E_t\subset[0,1)\) such that
\[
m(E_t)>\alpha
\qquad\text{and}\qquad
\int_{E_t}\sum_{i\in K}\Phi_i(t,s)\,ds<\varepsilon.
\]

Put
\[
E_t^0=E_t\cap[0,1-\tau),
\qquad
E_t^1=E_t\cap[1-\tau,1),
\]
and set
\[
k_\ell=n_0+\ell,
\qquad \ell=0,1.
\]
By \eqref{3.1},
\[
n(t,s)=k_\ell,
\qquad s\in E_t^\ell,\quad \ell=0,1.
\]

Fix \(i\in K\). Applying Lemma~\ref{lem3.1} on each \(E_t^\ell\), with
\(h\equiv1\) and weights
\[
\omega_j(s)=\rho_j(t+s-k_\ell),
\qquad j\in M_{k_\ell}(i),
\]
we obtain nonnegative measurable functions
\[
v_{i,j}^{(\ell)}:E_t^\ell\to[0,\infty),
\qquad j\in M_{k_\ell}(i),
\]
such that
\[
\sum_{j\in M_{k_\ell}(i)}v_{i,j}^{(\ell)}(s)=1
\quad\text{for every }s\in E_t^\ell,
\]
and
\[
\begin{aligned}
	&\int_{E_t^\ell}
	\sum_{j\in M_{k_\ell}(i)}
	|v_{i,j}^{(\ell)}(s)|^p
	\rho_j(t+s-k_\ell)\,ds\\
	&\qquad\leq
	\int_{E_t^\ell}
	\biggl(
	\sum_{j\in M_{k_\ell}(i)}
	\frac{1}{\rho_j(t+s-k_\ell)^{1/(p-1)}}
	\biggr)^{1-p}ds
	+\frac{\gamma}{2}.
\end{aligned}
\]
For \(s\in E_t\), define
\[
v_{i,j}(s):=
\begin{cases}
	v_{i,j}^{(0)}(s),
	& s\in E_t^0,\ j\in M_{k_0}(i),\\[1mm]
	v_{i,j}^{(1)}(s),
	& s\in E_t^1,\ j\in M_{k_1}(i),\\[1mm]
	0,
	& \text{otherwise}.
\end{cases}
\] Then
\[
\sum_{j\in M_{n(t,s)}(i)}v_{i,j}(s)=1
\quad\text{for every }s\in E_t,
\]
and
\begin{equation}\label{3.4}
	\int_{E_t}
	\sum_{j\in M_{n(t,s)}(i)}
	|v_{i,j}(s)|^p\rho_j(t+s-n(t,s))\,ds
	\leq
	\int_{E_t}\Phi_i(t,s)\,ds+\gamma.
\end{equation}

Define
\[
F_t^0:=\{s+\tau:s\in E_t^0\},
\qquad
F_t^1:=\{s+\tau-1:s\in E_t^1\}.
\]
For \(j\in I\) and \(w\in[0,1)\), define
\[
(u_t)_j(w):=
\begin{cases}
	g_i(w-\tau)v_{i,j}(w-\tau),
	&
	\begin{array}{l}
		w\in F_t^0,\quad j\in M_{n_0}(i)\\[-1mm]
		\text{for some }i\in K,
	\end{array}
	\\[3mm]
	g_i(w+1-\tau)v_{i,j}(w+1-\tau),
	&
	\begin{array}{l}
		w\in F_t^1,\quad j\in M_{n_0+1}(i)\\[-1mm]
		\text{for some }i\in K,
	\end{array}
	\\[3mm]
	0,
	& \;\;\text{otherwise}.
\end{cases}
\]
Since the sets \(M_n(i)\), \(i\in K\), are pairwise disjoint for each
fixed \(n\), and \(F_t^0\cap F_t^1=\varnothing\), the above definition is
unambiguous and measurable.

Since all the terms involved are nonnegative, Tonelli's theorem allows
us to interchange the relevant sums and integrals. Using \eqref{3.4}, we have
\[
\begin{aligned}
	\|u_t\|_{p,\rho}^p
	&=
	\int_{E_t}
	\sum_{i\in K}|g_i(s)|^p
	\sum_{j\in M_{n(t,s)}(i)}
	|v_{i,j}(s)|^p
	\rho_j(t+s-n(t,s))\,ds\\
	&\leq
	M_g^p
	\sum_{i\in K}
	\left(
	\int_{E_t}\Phi_i(t,s)\,ds+\gamma
	\right)\\
	&=
	M_g^p
	\int_{E_t}\sum_{i\in K}\Phi_i(t,s)\,ds
	+
	|K|M_g^p\gamma<
	M_g^p\varepsilon+|K|M_g^p\gamma .
\end{aligned}
\]
Then, by \eqref{3.3},
$
\|u_t\|_{p,\rho}^p<\eta^p$.
Hence
\(
f+u_t\in B(f,\eta)\subset U.
\)

Since \(t> R\), we have \(T_tf=0\). Hence
\(
T_t(f+u_t)=T_tu_t .
\)
By construction, for every \(i\in K\),
\[
\begin{aligned}
	(T_tu_t)_i(s)
	&=
	\sum_{j\in M_{n(t,s)}(i)}
	(u_t)_j(t+s-n(t,s))\\
	&=
	g_i(s)\chi_{E_t}(s)
	\sum_{j\in M_{n(t,s)}(i)}v_{i,j}(s)\\
	&=
	g_i(s)\chi_{E_t}(s)
\end{aligned}
\]
for a.e. \(s\in[0,1)\). Moreover, if \(i\notin K\), then the uniqueness of
ancestors in the tree implies that the above construction gives no contribution
to the \(i\)-th coordinate. Thus
\[
(T_tu_t)_i(s)=0,\qquad i\notin K,
\]
for a.e. \(s\in[0,1)\). Since \(\operatorname{supp}g\subset K\), it follows that
\[
\begin{aligned}
	\|T_t(f+u_t)-g\|_{p,\rho}^p
	&=
	\sum_{i\in K}
	\int_{E_t^c}
	|g_i(s)|^p\rho_i(s)\,ds .
\end{aligned}
\]
Since \(m(E_t^c)<1-\alpha\), it follows from
\eqref{3.2} that
\[
\|T_t(f+u_t)-g\|_{p,\rho}<\frac{\eta}{2}.
\]
Therefore
\[
T_t(f+u_t)\in B(g,\eta)\subset V.
\]
Thus \(t\in N(U,V)\). Hence
\(
A\subset N(U,V).
\)
Since \(A\in\mathcal F\) and \(\mathcal F\) is upward hereditary, we obtain
$
N(U,V)\in\mathcal F$.
Therefore \((T_t)_{t\geq0}\) is \(\mathcal F\)-transitive.
\end{proof}

\begin{cor}
	Let \(G\) be a rooted directed tree without leaves. Let
	\(\rho=(\rho_i)_{i\in I}\) be a \(1\)-admissible weight such that the left
	translation semigroup \((T_t)_{t\geq0}\) is a strongly continuous semigroup on
	\(L^1_\rho(L(G))\). 
	For a finite nonempty set \(K\subset I\), \(\varepsilon>0\), and
	\(0<\alpha<1\), define
	\[
	\mathcal N_1(K,\varepsilon,\alpha)
	=
	\Biggl\{
	t\geq0:
	\inf_{\substack{E\subset[0,1)\\ m(E)>\alpha}}
	\int_E
	\sum_{i\in K}
	\inf_{j\in M_{n(t,s)}(i)}
	\rho_j(t+s-n(t,s))
	\,ds
	<\varepsilon
	\Biggr\}.
	\]
	Then the following assertions are equivalent:
	\begin{enumerate}
		\renewcommand{\labelenumi}{\textnormal{(\roman{enumi})}}
		\item \((T_t)_{t\geq0}\) is \(\mathcal F\)-transitive on
		\(L^1_\rho(L(G))\);
		\item for every finite nonempty set \(K\subset I\), every
		\(\varepsilon>0\), and every \(0<\alpha<1\), one has
		\[
		\mathcal N_1(K,\varepsilon,\alpha)\in\mathcal F .
		\]
		\item \((T_t)_{t\geq0}\) is topologically \(\mathcal F\)-recurrent on $L^1_\rho(L(G))$.
	\end{enumerate}
\end{cor}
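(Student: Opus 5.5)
The plan is to repeat the proof of Theorem~\ref{thm3.1} verbatim, with the $p=1$ quantity
\[
\Phi^{(1)}_i(t,s):=\inf_{j\in M_{n(t,s)}(i)}\rho_j\bigl(t+s-n(t,s)\bigr)
\]
in place of $\Phi_i(t,s)$. Case~1 of Lemma~\ref{lem3.1} replaces Case~2, and the elementary $\ell^1$ estimate replaces Hölder.

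The implication (i)$\Rightarrow$(iii) is trivial. For (iii)$\Rightarrow$(i), the finite-edge-supported functions are dense and are annihilated by $T_t$ for $t$ large, since $G$ is rooted. Lemma~\ref{lem3.2} then applies, and nothing in that argument depends on $p$.

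For (i)$\Rightarrow$(ii), fix $K,\varepsilon,\alpha$ and put $y_K=\sum_{i\in K}\xi_i$. Let $c_K$ be the positive lower bound from Remark~\ref{rem:p-admissible}. Choose $\delta$ with $2|K|\delta/c_K<1-\alpha$ and $2\delta<\varepsilon$, and set $U=B(0,\delta)$, $V=B(y_K,\delta)$.
\begin{itemize}
\item For $t\in N(U,V)$, pick $f\in U$ with $T_tf\in V$ and define $E_t$ exactly as before. Chebyshev's inequality gives $m(E_t)>\alpha$.
\item On $E_t$ we have $|(T_tf)_i(s)|\ge\tfrac12$. The Hölder step is replaced by
\[
\Bigl|\sum_j f_j(\sigma)\Bigr|\,\inf_j\rho_j(\sigma)\le\sum_j|f_j(\sigma)|\rho_j(\sigma),
\]
so that $\Phi^{(1)}_i(t,s)\le 2\sum_j|f_j(\sigma)|\rho_j(\sigma)$.
\item The same splitting \eqref{3.1}, the changes of variables, and the disjointness of the $M_n(i)$ then yield $\int_{E_t}\sum_{i\in K}\Phi^{(1)}_i<2\|f\|<\varepsilon$.
\end{itemize}
Hence $N(U,V)\subset\mathcal N_1(K,\varepsilon,\alpha)$, and upward heredity gives (ii).

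For (ii)$\Rightarrow$(i), choose $f,g,K,\eta,\alpha,\varepsilon,\gamma$ as in the theorem, with all $p$-th powers replaced by first powers. Take $t\in\mathcal N_1\cap(R,\infty)$ and the corresponding set $E_t$.
\begin{itemize}
\item On each of $E_t^0$ and $E_t^1$, apply Case~1 of Lemma~\ref{lem3.1} with $h\equiv1$ and $\omega_j(s)=\rho_j(t+s-k_\ell)$. This produces indicator-type $v_{i,j}$ with $\sum_j v_{i,j}=1$ and
\[
\int_{E_t}\sum_j v_{i,j}\rho_j\le\int_{E_t}\Phi^{(1)}_i+\gamma .
\]
\item The integrability hypothesis of the lemma needs a $j_0$ with $\omega_{j_0}\in L^1(E_t^\ell)$. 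This holds because $\rho_{j_0}\in L^1[0,1)$ and the map $s\mapsto t+s-k_\ell$ is a translation on $E_t^\ell$.
\item Define $u_t$ by the same formula as in the theorem. Its norm is at most $M_g\varepsilon+|K|M_g\gamma<\eta$, and $T_t(f+u_t)=g\chi_{E_t}$.
\item Absolute continuity of the integral gives $\|T_t(f+u_t)-g\|<\eta/2$, so $t\in N(U,V)$.
\end{itemize}

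The only point needing genuine care is that $p$-admissibility for $p=1$ is the condition stated in Lemma~\ref{lem2.2}, so $T_t$ is bounded; and that the $\ell^1$ replacement of Hölder holds pointwise for a.e.\ $s$, even for countably infinite $M_n(i)$. Everything else is a mechanical transcription.
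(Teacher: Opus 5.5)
Your proposal is correct and follows the paper's own proof: the paper likewise repeats the argument of Theorem~\ref{thm3.1}, uses the case $p=1$ of Lemma~\ref{lem3.1}, and replaces the weighted Hölder estimate by $\bigl|\sum_j a_j\bigr|\inf_j\omega_j\le\sum_j|a_j|\omega_j$. Your extra checks simply make explicit details the paper leaves implicit: the $p=1$ constants (for example $2|K|\delta/c_K<1-\alpha$), and the integrability of $\omega_{j_0}$ that Lemma~\ref{lem3.1} requires.
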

\begin{proof}
	The proof follows the same argument as that of
	Theorem~\ref{thm3.1}, using the case \(p=1\) of
	Lemma~\ref{lem3.1}. In the necessity part, the weighted Hölder
	estimate is replaced by
	\[
	\bigl|\sum_{j\in J}a_j\bigr|
	\inf_{j\in J}\omega_j
	\leq
	\sum_{j\in J}|a_j|\omega_j.
	\]
\end{proof}

Assume that \(G\) is rootless. For \(i\in I\) and \(n\in\mathbb N_0\),
let \(K_n(i)\) denote the unique \(n\)-th ancestor edge of \(e_i\); that is,
\[
K_0(i)=i
\qquad\text{and}\qquad
i\in M_n\bigl(K_n(i)\bigr).
\]
The existence and uniqueness of \(K_n(i)\) follow from the rootlessness of
\(G\) and the uniqueness of directed paths.

For simplicity, we shall use the following notation throughout this section. 
For \(1<p<\infty\), define
\[
\Psi_i(t,s)
:=
\biggl(
\rho_{K_{n(t,s)}(i)}(s)^{-1/(p-1)}
+
\sum_{j\in M_{n(t,s)}(K_{n(t,s)}(i))}
\rho_j(\sigma(t,s))^{-1/(p-1)}
\biggr)^{1-p}.
\]
\begin{thm}\label{thm3.5}
	Let \(G\) be a rootless directed tree without leaves.
	Let \(1<p<\infty\), and let \(\rho=(\rho_i)_{i\in I}\) be a \(p\)-admissible
	weight such that the left translation semigroup \((T_t)_{t\geq0}\) is strongly
	continuous on \(L^p_\rho(L(G))\).
	
	For every finite nonempty set \(K\subset I\), \(\varepsilon>0\), and
	\(0<\alpha<1\), define
	\[
	\mathcal N(K,\varepsilon,\alpha)
	=
	\Biggl\{
	t\geq0:
	\inf_{\substack{E\subset[0,1)\ \mathrm{measurable}\\ m(E)>\alpha}}
	\int_E
	\sum_{i\in K}\bigl(\Phi_i(t,s)+\Psi_i(t,s)\bigr)\,ds
	<\varepsilon
	\Biggr\}.
	\]
	Then the following assertions are equivalent:
	\begin{enumerate}
		\renewcommand{\labelenumi}{\textnormal{(\roman{enumi})}}
		\item \((T_t)_{t\geq0}\) is \(\mathcal F\)-transitive on \(L^p_\rho(L(G))\);
		\item for every finite nonempty set \(K\subset I\), every
		\(\varepsilon>0\), and every \(0<\alpha<1\),
		\[
		\mathcal N(K,\varepsilon,\alpha)\in\mathcal F,
		\]
			\item \((T_t)_{t\geq0}\) is topologically \(\mathcal F\)-recurrent on \(L^p_\rho(L(G))\).
	\end{enumerate}
\end{thm}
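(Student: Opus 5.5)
The plan is to prove the cycle (i)$\Rightarrow$(iii)$\Rightarrow$(ii)$\Rightarrow$(i). The step (i)$\Rightarrow$(iii) is immediate from the definitions. Lemma~\ref{lem3.2} is not used here: in the rootless case $T_t$ moves mass onto ancestor edges, so finite-edge-supported functions are never annihilated, and density of $X_0$ is not available a priori. Instead I would prove (iii)$\Rightarrow$(ii) directly with a single ball. This matters because $\mathcal F$ is not assumed to be a filter, so I cannot intersect two return-time sets, one producing the $\Phi$ term and one producing the $\Psi$ term.

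\textbf{(iii)$\Rightarrow$(ii).} Fix $K$, $\varepsilon$, $\alpha$, put $y_K=\sum_{i\in K}\xi_i$ and $U=B(y_K,\delta)$ with $\delta$ small as in \eqref{3.0}. Since $K$ is finite, there is $R$ such that for $t>R$, and every $i\in K$ and $s$, the sets $M_{n(t,s)}(i)$ and the ancestors $K_{n(t,s)}(i)$ avoid $K$. By finite invariance, $N(U,U)\cap(R,\infty)\in\mathcal F$, and I will show that this set is contained in $\mathcal N(K,\varepsilon,\alpha)$.

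Let $t$ be in this set and choose $f=y_K+h\in U$ with $T_tf=y_K+h'$, where $\|h\|,\|h'\|<\delta$.
\begin{itemize}
\item[(a)] For $i\in K$, the equation $(T_tf)_i(s)=\sum_{j\in M_{n(t,s)}(i)}h_j(\sigma(t,s))$ is close to $1$ off a set of small measure. The Hölder estimate from Theorem~\ref{thm3.1} then bounds $\Phi_i(t,s)$ by $2^p\sum_j|h_j(\sigma)|^p\rho_j(\sigma)$.
\item[(b)] For the ancestor $a=K_{n(t,s)}(i)\notin K$, the value $(T_tf)_a(s)=h'_a(s)$ is small. The same value also equals $1+\sum_{j\in M_{n(t,s)}(a)}h_j(\sigma)$, because $i\in M_{n(t,s)}(a)$ contributes $y_K(\sigma)=1$ and other elements of $K$ there do not interfere for $t>R$. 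Hence
\[
1=h'_a(s)-\sum_{j\in M_{n(t,s)}(a)}h_j(\sigma(t,s)).
\]
Hölder's inequality with weights $\rho_a(s)$ and $\rho_j(\sigma)$ gives, on a good set,
\[
\Psi_i(t,s)\le 2^p\Bigl(|h'_a(s)|^p\rho_a(s)+\sum_j|h_j(\sigma)|^p\rho_j(\sigma)\Bigr).
\]
\end{itemize}
Integrating over the good set $E_t$ (with $m(E_t)>\alpha$, obtained as before from $\|T_tf-y_K\|<\delta$ and $\|h'\|<\delta$) and changing variables as in \eqref{3.1}, I bound the integral by $C\,2^p(\|h\|^p+\|h'\|^p)<\varepsilon$. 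For each fixed $n$, the sets $M_n(a)$ are disjoint for distinct $a$; repeated ancestors coming from different $i\in K$ only cost a factor $|K|$, which is absorbed into the choice of $\delta$.

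\textbf{(ii)$\Rightarrow$(i).} Take $f\in U$ and $g\in V$ in $\bigoplus C_c(0,1)$ and put $K=\operatorname{supp}f\cup\operatorname{supp}g$. Choose $\alpha$ as in \eqref{3.2}, but for both $f$ and $g$. Take $t\in\mathcal N(K,\varepsilon,\alpha)\cap(R,\infty)$, with $R$ large enough for the disjointness above, and a set $E_t$ that is almost optimal. The perturbation $u_t$ has two pieces.
\begin{itemize}
\item[(1)] The $\Phi$-piece is built exactly as in Theorem~\ref{thm3.1}, so that $(T_tu_t)_i=g_i\chi_{E_t}$ for $i\in K$.
\item[(2)] The cancellation piece kills $T_tf$. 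For $k\in\operatorname{supp}f$ and $a=K_{n(t,s)}(k)$, we have $(T_tf)_a(s)=f_k(\sigma(t,s))$. I apply Lemma~\ref{lem3.1} with $h=f_k\circ\sigma$ and the family $J=\{a\}\cup M_{n(t,s)}(a)$, whose weights are $\rho_a(s)$ and $\rho_j(\sigma)$. This gives measurable $v$'s summing to $1$. I then set $u_j(\sigma)=-f_k(\sigma)v_j$ for $j\in M_{n(t,s)}(a)$, which includes $j=k$ itself, and I accept a residue $f_k(\sigma)v_a(s)$ left on edge $a$ at $s$. The total cost is at most $\sup|f_k|^p\int_{E_t}\Psi_k+\gamma$, and the contribution from $E_t^c$ is controlled by the choice of $\alpha$.
\end{itemize}
Then $f+u_t\in U$ and $T_t(f+u_t)\in V$.

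The main obstacle is this cancellation bookkeeping. I must make the change of variables $s\leftrightarrow\sigma$ consistent: the residue sits at position $s$ on the ancestor edge while the perturbation sits at $\sigma$ on descendant edges. I must also check that the $\Phi$-piece and the cancellation piece live on disjoint edges for $t>R$, so they do not interfere, and I expect to need $t$ large so that ancestors of $\operatorname{supp}f$ lie outside $K$. The sign and normalization in Lemma~\ref{lem3.1} must be matched so that, for a.e.\ $s$, the net value $T_t(f+u_t)$ on $a$ equals exactly the small residue.
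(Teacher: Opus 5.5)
Your proposal is correct and follows essentially the same route as the paper. For (iii)$\Rightarrow$(ii) you use a single ball $B(y_K,\delta)$ together with the weighted H\"older estimate, applied on descendants and on the ancestor-plus-descendants family. For (ii)$\Rightarrow$(i) you use a target piece as in Theorem~\ref{thm3.1}, an ancestor cancellation via Lemma~\ref{lem3.1} that leaves residues on edges $a\notin K$, and direct subtraction of $f$ on $\sigma(t,E_t^c)$. That last piece is the paper's $w_t^{(0)}$; you leave it implicit, but your choice of $\alpha$ for $f$ is exactly what pays for it.

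There is one small slip. Several edges of $K$ (resp.\ of $\operatorname{supp}f$) may share the same ancestor $a=K_{n(t,s)}(i)$. So your identity should read $q=h'_a(s)-\sum_{j\in M_{n(t,s)}(a)}h_j(\sigma)$ with $q=|M_{n(t,s)}(a)\cap K|\ge1$, rather than with $1$ on the left. Likewise, $(T_tf)_a(s)$ is the grouped sum $\sum_{k\in\operatorname{supp}f\cap M_{n(t,s)}(a)}f_k(\sigma)$, not a single $f_k(\sigma)$. The paper handles this by dividing by $q$ and by applying Lemma~\ref{lem3.1} to the grouped function $F_a$, at the cost of a factor $|K|^{p-1}$, so the argument goes through unchanged.
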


\begin{proof}
	It is clear that \(\mathrm{(i)}\Rightarrow\mathrm{(iii)}\).
	
	\medskip
	\noindent
	\(\mathrm{(iii)}\Rightarrow\mathrm{(ii)}\).
	Fix a finite nonempty set \(K\subset I\), \(\varepsilon>0\), and
	\(0<\alpha<1\). For each \(i\in I\), let \(\xi_i\) be the function which is
	identically equal to \(1\) on the edge \(e_i\) and equal to \(0\) on all other
	edges. Put
	\[
	y_K=\sum_{i\in K} \xi_i.
	\]
	Since \(K\) is finite, \(y_K\in L^p_\rho(L(G))\).
	
	By \(p\)-admissibility, each \(\rho_i\) is bounded away from \(0\)
	on \([0,1)\). Hence
	\[
	c_K=
	\min_{i\in K}\operatorname*{inf}_{s\in[0,1)}\rho_i(s)>0 .
	\]
	Choose \(\delta>0\) so small that
	\[
	\frac{2^p |K|\delta^p}{c_K}<1-\alpha
	\quad\text{and}\quad
	\bigl(2^p+2|K|\bigr)\delta^p<\varepsilon .
	\]
	
	Let
	\(
	U=B(y_K,\delta).
	\)
	Since \((T_t)_{t\geq0}\) is topologically \(\mathcal F\)-recurrent, we have
	\[
	N(U,U):=\{t\geq0:T_t(U)\cap U\neq\varnothing\}\in\mathcal F .
	\]
	
	Since \(K\) is finite and \(G\) is a directed tree, there exists an integer
	\(N\geq1\) such that, for every \(n\geq N\) and every \(i\in K\),
	\begin{equation}\label{3.5}
		M_n(i)\cap K=\varnothing,
		\qquad
		K_n(i)\notin K,
		\quad\ i\in K.
	\end{equation}

	Choose \(R>0\) so large that
	\[
	n(t,s)\geq N,\qquad t> R,\ s\in[0,1).
	\]
	By the finite invariance of \(\mathcal F\),
	\[
	A:=N(U,U)\cap (R,\infty)\in\mathcal F .
	\]
	We will prove that
	\[
	A\subset \mathcal N(K,\varepsilon,\alpha).
	\]
	
Let \(t\in A\). Then there exists \(f\in U\) such that \(T_tf\in U\).
Hence
\begin{equation}\label{3.6}
	\|f-y_K\|_{p,\rho}<\delta,
	\qquad
	\|T_tf-y_K\|_{p,\rho}<\delta.
\end{equation}
For each \(i\in K\), set
\[
E_{t,i}:=
\left\{
s\in[0,1):
\bigl|(T_tf)_i(s)-1\bigr|<\frac12
\right\},
\qquad
E_t:=\bigcap_{i\in K}E_{t,i}.
\]
As in the proof of Theorem~\ref{thm3.1}, using the definition of \(c_K\)
and \eqref{3.6}, we obtain
\[
m(E_{t,i}^c)<\frac{2^p\delta^p}{c_K},
\qquad i\in K.
\]
Consequently, we obtain
\[
m(E_t^c)
\leq
\sum_{i\in K}m(E_{t,i}^c)
<
\frac{2^p|K|\delta^p}{c_K}
<
1-\alpha,
\]
and hence \(m(E_t)>\alpha\).
	
We first estimate the descendant term. For \(s\in E_t\) and \(i\in K\), put
\[
n=n(t,s),\qquad \sigma=\sigma(t,s).
\]
Since
\[
\bigl|(T_tf)_i(s)\bigr|\geq \frac12\quad
\text{and}\quad
(T_tf)_i(s)=\sum_{j\in M_n(i)}f_j(\sigma),
\]
the weighted Hölder estimate gives
\[
\Phi_i(t,s)
\leq
2^p
\sum_{j\in M_n(i)}
|f_j(\sigma)|^p\rho_j(\sigma).
\]
Using the decomposition of \([0,1)\) introduced above, the changes of
variables \(u=s+\tau\) and \(u=s+\tau-1\), and
\eqref{3.5}, we obtain
\[
\begin{aligned}
	&\int_{E_t}\sum_{i\in K}\Phi_i(t,s)\,ds
	\leq
	2^p
	\sum_{i\in K}
	\int_{E_t\cap[0,1-\tau)}
	\sum_{j\in M_{n_0}(i)}
	|f_j(s+\tau)|^p\rho_j(s+\tau)\,ds\\
	&\quad+
	2^p
	\sum_{i\in K}
	\int_{E_t\cap[1-\tau,1)}
	\sum_{j\in M_{n_0+1}(i)}
	|f_j(s+\tau-1)|^p\rho_j(s+\tau-1)\,ds\\
	&\leq
	2^p
	\sum_{j\in I}
	\int_{\tau}^{1}
	\bigl|(f-y_K)_j(u)\bigr|^p\rho_j(u)\,du+
	2^p
	\sum_{j\in I}
	\int_{0}^{\tau}
	\bigl|(f-y_K)_j(u)\bigr|^p\rho_j(u)\,du\\
	&=
	2^p\|f-y_K\|_{p,\rho}^p
	<
	2^p\delta^p .
\end{aligned}
\]

Next we estimate the ancestral-branch term. Fix \(s\in E_t\) and \(i\in K\).
Since \(n(t,s)\geq N\), it follows from
\eqref{3.5} that \(K_n(i)\notin K\). Hence
\[
(y_K)_{K_n(i)}(s)=0.
\]
Moreover, since \(i\in M_n(K_n(i))\), the set
\(M_n(K_n(i))\cap K\) is nonempty. Let
\[
q=\bigl|M_n(K_n(i))\cap K\bigr|.
\]
Then \(1\leq q\leq |K|\) and
\[
\sum_{j\in M_n(K_n(i))}(y_K)_j(\sigma)=q .
\]
Then
we get
\[
(T_tf)_{K_n(i)}(s)
-
\sum_{j\in M_n(K_n(i))}
\bigl(f_j(\sigma)-(y_K)_j(\sigma)\bigr)
=q .
\]
Dividing by \(q\) and applying the weighted Hölder estimate with weights
\(\rho_{K_n(i)}(s)\) and \(\rho_j(\sigma)\),
\(j\in M_n(K_n(i))\), we obtain
\[
\begin{aligned}
	\Psi_i(t,s)
	&\leq
	\frac{1}{q^p}
	\biggl(
	\bigl|(T_tf)_{K_n(i)}(s)\bigr|^p\rho_{K_n(i)}(s)
	+
	\sum_{j\in M_n(K_n(i))}
	\bigl|f_j(\sigma)-(y_K)_j(\sigma)\bigr|^p\rho_j(\sigma)
	\biggl)\\
	&\leq
	\bigl|(T_tf)_{K_n(i)}(s)\bigr|^p\rho_{K_n(i)}(s)
	+
	\sum_{j\in M_n(K_n(i))}
	\bigl|f_j(\sigma)-(y_K)_j(\sigma)\bigr|^p\rho_j(\sigma).
\end{aligned}
\]

Integrating over \(E_t\) and summing over \(i\in K\), we get
\[
\begin{aligned}
	\int_{E_t}\sum_{i\in K}\Psi_i(t,s)\,ds
	&\leq
	\int_{E_t}
	\sum_{i\in K}
	\bigl|(T_tf)_{K_n(i)}(s)\bigr|^p
	\rho_{K_n(i)}(s)\,ds\\
	&\quad+
	\int_{E_t}
	\sum_{i\in K}
	\sum_{j\in M_n(K_n(i))}
	\bigl|f_j(\sigma)-(y_K)_j(\sigma)\bigr|^p
	\rho_j(\sigma)\,ds,
\end{aligned}
\]
where \(n=n(t,s)\) and \(\sigma=\sigma(t,s)\). Moreover, for each fixed \(s\in E_t\), every coordinate is counted at
most \(|K|\) times. Since all the terms are nonnegative, Tonelli's
theorem shows that the first term is bounded by
\[
|K|\|T_tf-y_K\|_{p,\rho}^p.
\]
Indeed, \(K_n(i)\notin K\), so
\[
(T_tf)_{K_n(i)}
=
(T_tf-y_K)_{K_n(i)},
\]
and at most \(|K|\) such terms are counted.

For the second term, using again the decomposition of \([0,1)\) and the
changes of variables \(u=s+\tau\) and \(u=s+\tau-1\), each edge is counted at
most \(|K|\) times. Hence the second term is bounded by
\[
|K|\|f-y_K\|_{p,\rho}^p .
\]
Consequently, by \eqref{3.6},
\[
\begin{aligned}
	\int_{E_t}\sum_{i\in K}\Psi_i(t,s)\,ds
	\leq
	|K|\|T_tf-y_K\|_{p,\rho}^p
	+
	|K|\|f-y_K\|_{p,\rho}^p
	<
	2|K|\delta^p .
\end{aligned}
\]

Combining the estimates for \(\Phi_i\) and \(\Psi_i\), we get
\[
\int_{E_t}
\sum_{i\in K}\bigl(\Phi_i(t,s)+\Psi_i(t,s)\bigr)\,ds
<
\bigl(2^p+2|K|\bigr)\delta^p
<
\varepsilon .
\]
Since \(m(E_t)>\alpha\), this implies
\(
t\in\mathcal N(K,\varepsilon,\alpha).
\)
Thus
\(
A\subset \mathcal N(K,\varepsilon,\alpha).
\)
Since \(A\in\mathcal F\) and \(\mathcal F\) is upward hereditary, it follows that
\(
\mathcal N(K,\varepsilon,\alpha)\in\mathcal F.
\)
This proves \(\mathrm{(ii)}\).
	
\noindent
\(\mathrm{(ii)}\Rightarrow\mathrm{(i)}\).
We divide the proof into several steps.

\medskip
\noindent\textit{Step 1. Preliminary choices.}
Let \(U,V\) be two nonempty open subsets of \(L^p_\rho(L(G))\). Since \(\bigoplus_{i\in I}C_c(0,1)\) is dense in
\(L^p_\rho(L(G))\), choose nonzero functions
\[
f\in U\cap\bigoplus_{i\in I}C_c(0,1),
\qquad
g\in V\cap\bigoplus_{i\in I}C_c(0,1),
\]
and \(\eta>0\) such that
\[
B(f,\eta)\subset U,
\qquad
B(g,\eta)\subset V.
\]
Set
\(
K:=\operatorname{supp}f\cup\operatorname{supp}g.
\)
Then \(K\subset I\) is finite and nonempty.
\[
M_{f,g}
=
\max\left\{
1,\max_{i\in K}\bigl(\|f_i\|_\infty+\|g_i\|_\infty\bigr)
\right\}.
\]

By the absolute continuity of the integral, choose \(0<\alpha<1\) so close to
\(1\) that, whenever \(B\subset[0,1)\) is measurable and \(m(B)<1-\alpha\),
\begin{equation}\label{3.7}
	\sum_{i\in K}\int_B |f_i(s)|^p\rho_i(s)\,ds
	<
	\frac{\eta^p}{4},
	\qquad
	\sum_{i\in K}\int_B |g_i(s)|^p\rho_i(s)\,ds
	<
	\frac{\eta^p}{4}
\end{equation}
Next choose \(\varepsilon>0\) and \(\gamma>0\) so small that
\begin{equation}\label{3.8}
	|K|^{p-1}M_{f,g}^p\varepsilon<\frac{\eta^p}{8},
	\qquad
	\gamma<\frac{\eta^p}{8}.
\end{equation}
By assumption,
\(
\mathcal N(K,\varepsilon,\alpha)\in\mathcal F.
\)
Choose \(R>0\) so large that, for every \(t>R\), \(s\in[0,1)\), and
\(i\in K\),
\[
M_{n(t,s)}(i)\cap K=\varnothing,
\qquad
K_{n(t,s)}(i)\notin K.
\]
Set
\[
A:=\mathcal N(K,\varepsilon,\alpha)\cap(R,\infty).
\]
By \(\mathrm{(ii)}\) and the finite invariance of \(\mathcal F\), we have
\(A\in\mathcal F\). We shall prove that
\(
A\subset N(U,V).
\)

Fix \(t\in A\). Then there exists a measurable set \(E_t\subset[0,1)\) such
that
\[
m(E_t)>\alpha
\qquad\text{and}\qquad
\int_{E_t}
\sum_{i\in K}\bigl(\Phi_i(t,s)+\Psi_i(t,s)\bigr)\,ds
<\varepsilon.
\]

\medskip
\noindent\textit{Step 2. Cancellation on \(E_t^c\).}
Write
\[
t=n_0+\tau,
\qquad
n_0=\lfloor t\rfloor,
\qquad
0\leq\tau<1.
\]
Let \(E_t^c=[0,1)\setminus E_t\), and set
\[
B_t=
\bigl(E_t^c\cap[0,1-\tau)\bigr)+\tau
\ \cup\
\bigl(E_t^c\cap[1-\tau,1)\bigr)+\tau-1.
\]
Define
\[
(w_t^{(0)})_i=-f_i\chi_{B_t},
\qquad i\in K,
\]
and set \(w_t^{(0)}=0\) on all other edges. The two sets in the above
union are disjoint, and translations preserve Lebesgue measure. Hence
\[
m(B_t)=m(E_t^c)<1-\alpha.
\]
It follows from \eqref{3.7} that
\begin{equation}\label{3.9}
	\|w_t^{(0)}\|_{p,\rho}^p
	=
	\sum_{i\in K}\int_{B_t}|f_i(s)|^p\rho_i(s)\,ds
	<
	\frac{\eta^p}{4}.
\end{equation}

\medskip
\noindent\textit{Step 3. Cancellation of \(f\) on \(E_t\).}
As in the proof of Theorem~\ref{thm3.1}, let \(k_\ell\) be defined by
\(n(t,s)=k_\ell\) for \(s\in E_t^\ell\), \(\ell=0,1\). For \(\ell=0,1\), set
\[
\mathcal A_{t,\ell}
:=
\{K_{k_\ell}(i):i\in\operatorname{supp}f\}.
\]
For \(a\in\mathcal A_{t,\ell}\), define
\[
I_{a,\ell}
:=
\{i\in\operatorname{supp}f:K_{k_\ell}(i)=a\},
\qquad
F_{a,\ell}(s)
:=
\sum_{i\in I_{a,\ell}}f_i(s+\tau-\ell),
\quad s\in E_t^\ell.
\]
Apply Lemma~\ref{lem3.1} on \(E_t^\ell\), with
\[
h=F_{a,\ell},
\qquad
\omega_a(s)=\rho_a(s),
\qquad
\omega_j(s)=\rho_j(s+\tau-\ell),
\quad j\in M_{k_\ell}(a),
\]
and with error \(\gamma/(4|K|)\). This is possible since \(F_{a,\ell}\) is bounded and
\(\rho_a\in L^1((0,1))\). Consequently,
\[
\int_{E_t^\ell}|F_{a,\ell}(s)|^p\rho_a(s)\,ds<\infty.
\]
Thus, there exist nonnegative measurable functions
\[
\alpha_a^{(\ell)}
\quad\text{and}\quad
\beta_{a,j}^{(\ell)},
\qquad j\in M_{k_\ell}(a),
\]
such that, for a.e. \(s\in E_t^\ell\),
\[
\alpha_a^{(\ell)}(s)
+
\sum_{j\in M_{k_\ell}(a)}
\beta_{a,j}^{(\ell)}(s)
=1.
\]
Moreover, the corresponding estimate in Lemma~\ref{lem3.1} holds with error \(\gamma/(4|K|)\).

For \(s\in E_t^\ell\), write
\[
\mathcal A_t(s)=\mathcal A_{t,\ell},
\qquad
F_a(s)=F_{a,\ell}(s),
\]
and set
\[
\alpha_a(s)=\alpha_a^{(\ell)}(s),
\qquad
\beta_{a,j}(s)=\beta_{a,j}^{(\ell)}(s),\quad I_a(s)=I_{a,\ell}.
\]
Since \(E_t^0\cap E_t^1=\varnothing\), these functions are well defined
and measurable on \(E_t\).

Define
\[
r_a(s)=F_a(s)\alpha_a(s),
\qquad
b_{a,j}(s)=F_a(s)\beta_{a,j}(s).
\]
Then, for a.e. \(s\in E_t\),
\[
r_a(s)+\sum_{j\in M_n(a)}b_{a,j}(s)=F_a(s),
\qquad a\in\mathcal A_t(s),
\]
where \(n=n(t,s)\) and \(\sigma=\sigma(t,s)\).

Since
\(
|\mathcal A_{t,\ell}|\leq |K|
\),
the lemma is applied at most \(2|K|\) times. Summing the corresponding
estimates yields
\[
\begin{aligned}
	&\int_{E_t}
	\sum_{a\in\mathcal A_t(s)}
	\biggl(
	|r_a(s)|^p\rho_a(s)
	+
	\sum_{j\in M_n(a)}
	|b_{a,j}(s)|^p\rho_j(\sigma)
	\biggr)\,ds\\
	&\qquad\leq
	\int_{E_t}
	\sum_{a\in\mathcal A_t(s)}
	|F_a(s)|^p
	\biggl(
	\rho_a(s)^{-1/(p-1)}
	+
	\sum_{j\in M_n(a)}
	\rho_j(\sigma)^{-1/(p-1)}
	\biggr)^{1-p}ds
	+
	\frac{\gamma}{2}.
\end{aligned}
\]

Set
\[
F_t^0:=E_t^0+\tau,
\qquad
F_t^1:=E_t^1+\tau-1.
\]
For \(j\in I\) and \(u\in[0,1)\), define
\[
(w_t^{(1)})_j(u):=
\begin{cases}
	-b_{a,j}(u-\tau),
	&
	
		u\in F_t^0,\;\ a\in\mathcal A_{t,0},\;
		j\in M_{k_0}(a),
	\\[2mm]
	-b_{a,j}(u+1-\tau),
	&
		u\in F_t^1,\;\ a\in\mathcal A_{t,1},\;
		j\in M_{k_1}(a),
\\[2mm]
	0,
	&
	\text{otherwise}.
\end{cases}
\]
For each fixed \(\ell\), the sets \(M_{k_\ell}(a)\),
\(a\in\mathcal A_{t,\ell}\), are pairwise disjoint, while
\(F_t^0\cap F_t^1=\varnothing\). Hence the definition is unambiguous and
\(w_t^{(1)}\) is measurable.

For \(s\in E_t\) and \(a\in\mathcal A_t(s)\), put
\(
q_a(s)=|I_a(s)|.
\)
Then
\[
q_a(s)\leq |K|,
\qquad
|F_a(s)|\leq q_a(s)M_{f,g}.
\]
Moreover, for every \(i\in I_a(s)\),
\[
\Psi_i(t,s)
=
\biggl(
\rho_a(s)^{-1/(p-1)}
+
\sum_{j\in M_n(a)}
\rho_j(\sigma)^{-1/(p-1)}
\biggr)^{1-p}.
\]
Consequently,
\[
\begin{aligned}
	&|F_a(s)|^p
	\biggl(
	\rho_a(s)^{-1/(p-1)}
	+
	\sum_{j\in M_n(a)}
	\rho_j(\sigma)^{-1/(p-1)}
	\biggr)^{1-p}\\
	&\qquad\leq
	q_a(s)^pM_{f,g}^p
	\biggl(
	\rho_a(s)^{-1/(p-1)}
	+
	\sum_{j\in M_n(a)}
	\rho_j(\sigma)^{-1/(p-1)}
	\biggr)^{1-p}\\
	&\qquad\leq
	|K|^{p-1}M_{f,g}^p
	\sum_{i\in I_a(s)}\Psi_i(t,s).
\end{aligned}
\]
Therefore,
we obtain
\begin{equation}\label{3.10}
\begin{aligned}
	\|w_t^{(1)}\|_{p,\rho}^p
	+
	\int_{E_t}
	\sum_{a\in\mathcal A_t(s)}
	|r_a(s)|^p\rho_a(s)\,ds
	\leq
	|K|^{p-1}M_{f,g}^p
	\int_{E_t}
	\sum_{i\in K}\Psi_i(t,s)\,ds
	+
	\frac{\gamma}{2}.
\end{aligned}
\end{equation}

\medskip
\noindent\textit{Step 4. Construction of the target term on \(E_t\).} We next construct \(w_t^{(2)}\), which produces \(g\) on \(E_t\). By the same
splitting argument and measurable selection as in the proof of Theorem \ref{thm3.1},
applied to \(\Phi_i\) on \(E_t^0=E_t\cap[0,1-\tau)\) and
\(E_t^1=E_t\cap[1-\tau,1)\), we may choose nonnegative measurable functions
\(v_{i,j}^{(2)}\), \(i\in\operatorname{supp}g\), satisfying
\[
\sum_{j\in M_{n(t,s)}(i)}v_{i,j}^{(2)}(s)=1,
\qquad s\in E_t,
\]
and, after combining the estimates on \(E_t^0\) and \(E_t^1\),
\[
\begin{aligned}
	\int_{E_t}
	\sum_{j\in M_{n(t,s)}(i)}
	|v_{i,j}^{(2)}(s)|^p
	\rho_j(\sigma(t,s))\,ds\leq
	\int_{E_t}\Phi_i(t,s)\,ds
	+
	\frac{\gamma}{2|K|M_{f,g}^p}.
\end{aligned}
\]
For \(j\in I\) and \(u\in[0,1)\), define
\[
(w_t^{(2)})_j(u):=
\begin{cases}
	g_i(u-\tau)v_{i,j}^{(2)}(u-\tau),
	&
	u\in F_t^0, i\in\operatorname{supp}g,\;
	j\in M_{k_0}(i),
	\\[2mm]
	g_i(u+1-\tau)v_{i,j}^{(2)}(u+1-\tau),
	&
	u\in F_t^1, i\in\operatorname{supp}g,\;
	j\in M_{k_1}(i),
	\\[2mm]
	0,
	&
	\text{otherwise}.
\end{cases}
\]
For each fixed \(\ell\), the sets \(M_{k_\ell}(i)\),
\(i\in\operatorname{supp}g\), are pairwise disjoint, while
\(F_t^0\cap F_t^1=\varnothing\). Hence the definition is unambiguous and
\(w_t^{(2)}\) is measurable. Moreover,
\[
(T_tw_t^{(2)})_i(s)=g_i(s),
\qquad i\in\operatorname{supp}g,
\]
for a.e. \(s\in E_t\), and
\begin{equation}\label{3.11}
\|w_t^{(2)}\|_{p,\rho}^p
\leq
M_{f,g}^p
\int_{E_t}\sum_{i\in K}\Phi_i(t,s)\,ds
+
\frac{\gamma}{2}.
\end{equation}

\medskip
\noindent\textit{Step 5. Final estimates.} Moreover, the supports of
\(w_t^{(0)}\), \(w_t^{(1)}\), and \(w_t^{(2)}\) are pairwise disjoint up to null
sets. The support of \(w_t^{(0)}\) is disjoint from those of
\(w_t^{(1)}\) and \(w_t^{(2)}\). Indeed, the map
\(
s\longmapsto \sigma(t,s)
\)
is a bijection of \([0,1)\) onto itself, and
\[
B_t=\sigma(t,E_t^c),
\qquad
F_t^0\cup F_t^1=\sigma(t,E_t).
\]
Hence
\[
B_t\cap(F_t^0\cup F_t^1)=\varnothing.
\]
Since \(w_t^{(0)}\) is supported in \(K\times B_t\), whereas
\(w_t^{(1)}\) and \(w_t^{(2)}\) are supported in
\(I\times(F_t^0\cup F_t^1)\), the asserted disjointness follows.
 Moreover, the supports of \(w_t^{(1)}\) and \(w_t^{(2)}\) are disjoint
 up to null sets. Indeed, if
 \(j\in M_{k_\ell}(a)\cap M_{k_\ell}(i)\), then the uniqueness of the
 \(k_\ell\)-th ancestor gives \(a=i\), contradicting
 \(a\notin K\) and \(i\in\operatorname{supp}g\subset K\).
 
 Set 
\[
w_t=w_t^{(0)}+w_t^{(1)}+w_t^{(2)}.
\]
Combining the estimates in \eqref{3.9}--\eqref{3.11} and applying \eqref{3.8}, we obtain
\[
\begin{aligned}
	\|w_t\|_{p,\rho}^p
	&=
	\|w_t^{(0)}\|_{p,\rho}^p
	+
	\|w_t^{(1)}\|_{p,\rho}^p
	+
	\|w_t^{(2)}\|_{p,\rho}^p \\
	&\leq
	\frac{\eta^p}{4}
	+
	|K|^{p-1}M_{f,g}^p
	\int_{E_t}\sum_{i\in K}
	\bigl(\Phi_i(t,s)+\Psi_i(t,s)\bigr)\,ds
	+
	\gamma \\
	&<
	\frac{\eta^p}{4}
	+
	|K|^{p-1}M_{f,g}^p\varepsilon
	+
	\gamma \\
	&<
	\frac{\eta^p}{4}+\frac{\eta^p}{8}+\frac{\eta^p}{8}
	=
	\frac{\eta^p}{2}.
\end{aligned}
\]
Hence
\[
\|w_t\|_{p,\rho}<\eta,
\qquad
f+w_t\in B(f,\eta)\subset U.
\]

It remains to estimate \(T_t(f+w_t)-g\). On \(E_t\), the construction of
\(w_t^{(2)}\) gives the target coordinates \(g_i\),
\(i\in\operatorname{supp}g\), and the construction of \(w_t^{(1)}\) leaves only
the residual terms \(r_a\), \(a\in\mathcal A_t(s)\). Hence, by  \eqref{3.8} and \eqref{3.10}
\[
\begin{aligned}
	&\sum_{i\in I}\int_{E_t}
\bigl|(T_t(f+w_t))_i(s)-g_i(s)\bigr|^p
\rho_i(s)\,ds\\
&\qquad=
\sum_{i\in I}\int_{E_t}
\bigl|
\bigl(T_t(f+w_t^{(0)}+w_t^{(1)}+w_t^{(2)})\bigr)_i(s)
-g_i(s)
\bigr|^p
\rho_i(s)\,ds\\
&\qquad=
\int_{E_t}
\sum_{a\in\mathcal A_t(s)}
|r_a(s)|^p\rho_a(s)\,ds\\
	&\qquad\leq
	|K|^{p-1}M_{f,g}^p
	\int_{E_t}\sum_{i\in K}\Psi_i(t,s)\,ds
	+\frac{\gamma}{2}\\
	&\qquad<
	|K|^{p-1}M_{f,g}^p\varepsilon+\frac{\gamma}{2}
	<
	\frac{\eta^p}{4}.
\end{aligned}
\]

On \(E_t^c\), by the definition of \(B_t\), the term \(w_t^{(0)}\) cancels
\(f\), while \(w_t^{(1)}\) and \(w_t^{(2)}\) do not contribute, since they are
supported on the image of \(E_t\) under \(s\mapsto\sigma(t,s)\). Hence
\[
\bigl(T_t(f+w_t)\bigr)_i(s)=0,
\qquad i\in I,
\]
for a.e. \(s\in E_t^c\).
Since   \(m(E_t^c)<1-\alpha\), it follows from
\eqref{3.7} that,
\[
\sum_{i\in I}\int_{E_t^c}
\bigl|(T_t(f+w_t))_i(s)-g_i(s)\bigr|^p
\rho_i(s)\,ds
=
\sum_{i\in K}\int_{E_t^c}|g_i(s)|^p\rho_i(s)\,ds
<
\frac{\eta^p}{4}.
\]
Combining the estimates on \(E_t\) and \(E_t^c\), we get
\[
\|T_t(f+w_t)-g\|_{p,\rho}^p
<
\frac{\eta^p}{2}.
\]
Hence
\[
T_t(f+w_t)\in B(g,\eta)\subset V.
\]
Therefore \(t\in N(U,V)\). Since \(t\in A\) was arbitrary, $
A\subset N(U,V)$.
Since \(A\in\mathcal F\) and \(\mathcal F\) is upward hereditary, we obtain
\(
N(U,V)\in\mathcal F.
\)
Thus \((T_t)_{t\geq0}\) is \(\mathcal F\)-transitive on \(L^p_\rho(L(G))\).
\end{proof}
For the special case \(p=1\), the corresponding quantities are defined in terms
of pointwise infima of the weights. More precisely, define
\[
\Phi_i^{(1)}(t,s)
=
\inf_{j\in M_{n(t,s)}(i)}
\rho_j(\sigma(t,s))
\]
and
\[
\Psi_i^{(1)}(t,s)
=
\inf\left\{
\rho_{K_{n(t,s)}(i)}(s),
\inf_{j\in M_{n(t,s)}(K_{n(t,s)}(i))}
\rho_j(\sigma(t,s))
\right\}.
\]

\begin{cor}
	Let \(G\) be a rootless directed tree without leaves. Let
	\(\rho=(\rho_i)_{i\in I}\) be a \(1\)-admissible weight such that the left
	translation semigroup \((T_t)_{t\geq0}\) is strongly continuous on
	\(L^1_\rho(L(G))\).
	
	For every finite nonempty set \(K\subset I\), \(\varepsilon>0\), and
	\(0<\alpha<1\), define
	\[
	\mathcal N_1(K,\varepsilon,\alpha)
	=
	\Biggl\{
	t\geq0:
	\inf_{\substack{E\subset[0,1)\ \mathrm{measurable}\\ m(E)>\alpha}}
	\int_E
	\sum_{i\in K}\bigl(\Phi_i^{(1)}(t,s)+\Psi_i^{(1)}(t,s)\bigr)\,ds
	<\varepsilon
	\Biggr\}.
	\]
	Then the following assertions are equivalent:
	\begin{enumerate}
		\renewcommand{\labelenumi}{\textnormal{(\roman{enumi})}}
		\item \((T_t)_{t\geq0}\) is \(\mathcal F\)-transitive on 	\(L^1_\rho(L(G))\);
		\item for every finite nonempty set \(K\subset I\), every
		\(\varepsilon>0\), and every \(0<\alpha<1\),
		\[
		\mathcal N_1(K,\varepsilon,\alpha)\in\mathcal F .
		\]
		\item \((T_t)_{t\geq0}\) is topologically \(\mathcal F\)-recurrent on 	\(L^1_\rho(L(G))\).
	\end{enumerate}
\end{cor}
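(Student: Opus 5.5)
The plan is to follow the proof of Theorem~\ref{thm3.5} step by step, replacing every $p$-dependent ingredient by its $L^1$ counterpart. The implication (i)$\Rightarrow$(iii) is trivial. For (iii)$\Rightarrow$(ii), fix $K$, $\varepsilon$, $\alpha$ and put $U=B(y_K,\delta)$. Choose $\delta$ with $2|K|\delta/c_K<1-\alpha$ and $(2+2|K|)\delta<\varepsilon$, where $c_K$ comes from Remark~\ref{rem:p-admissible}. Take $R$ so large that \eqref{3.5} holds for $n(t,s)\geq N$, and set $A=N(U,U)\cap(R,\infty)\in\mathcal F$. For $t\in A$ pick $f\in U$ with $T_tf\in U$, and define $E_t$ as before; then $m(E_t)>\alpha$ by the same Chebyshev argument.

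The Hölder estimate is replaced throughout by the elementary inequality
\[
\Bigl|\sum_{j\in J}a_j\Bigr|\inf_{j\in J}\omega_j\leq\sum_{j\in J}|a_j|\,\omega_j .
\]
Applied to $(T_tf)_i(s)=\sum_{j\in M_n(i)}f_j(\sigma)$ with $|(T_tf)_i(s)|\geq\frac12$, it gives $\Phi_i^{(1)}(t,s)\leq 2\sum_{j\in M_n(i)}|f_j(\sigma)|\rho_j(\sigma)$. For the ancestor term I would use the identity
\[
(T_tf)_{K_n(i)}(s)-\sum_{j\in M_n(K_n(i))}\bigl(f_j(\sigma)-(y_K)_j(\sigma)\bigr)=q\geq1,
\]
and apply the same inequality to the family consisting of $\rho_{K_n(i)}(s)$ together with the $\rho_j(\sigma)$. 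This bounds $\Psi_i^{(1)}$ by $|(T_tf)_{K_n(i)}(s)|\rho_{K_n(i)}(s)+\sum_j|f_j(\sigma)-(y_K)_j(\sigma)|\rho_j(\sigma)$. The integration step is unchanged: split via \eqref{3.1}, change variables, and use the disjointness of the sets $M_n(i)$ together with the at-most-$|K|$ overcounting. This yields $\int_{E_t}\sum_{i\in K}(\Phi_i^{(1)}+\Psi_i^{(1)})<(2+2|K|)\delta<\varepsilon$.

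For (ii)$\Rightarrow$(i), I would repeat Steps 1--5 of the proof of Theorem~\ref{thm3.5}, with Lemma~\ref{lem3.1} invoked in the case $p=1$. That case supplies indicator-type partitions of unity, $v_j=\chi_{A_{k,N}}$, whose weighted sum is within $\delta$ of the integral of $|h|\inf_j\omega_j$. The cancellation term $w_t^{(0)}$ on $B_t$ is unchanged. The ancestor cancellation $w_t^{(1)}$ uses $h=F_{a,\ell}$ with the weights $\rho_a(s)$ and $\rho_j(s+\tau-\ell)$ for $j\in M_{k_\ell}(a)$. The relevant infimum is exactly $\Psi_i^{(1)}(t,s)$ for each $i\in I_a(s)$, and $|F_a|\leq q_aM_{f,g}\leq M_{f,g}\sum_{i\in I_a}1$. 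This gives the analogue of \eqref{3.10} with constant $M_{f,g}$ in place of $|K|^{p-1}M_{f,g}^p$. The target term $w_t^{(2)}$ is built from $\Phi_i^{(1)}$ exactly as in Theorem~\ref{thm3.1}. Choose $\varepsilon,\gamma$ with $M_{f,g}\varepsilon<\eta/8$ and $\gamma<\eta/8$, and use norms without $p$-th powers.

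The only point needing genuine care is Step 3, which must hold in the $L^1$ setting. One issue is checking the integrability hypothesis of Lemma~\ref{lem3.1} with $j_0=a$; this is fine because $F_{a,\ell}$ is bounded and $\rho_a\in L^1$. The other is the disjointness of supports in Step 5. Neither argument used $p>1$: disjointness is purely combinatorial, coming from the uniqueness of ancestors and from $\sigma(t,\cdot)$ being a bijection. So the bookkeeping carries over verbatim. I expect no new obstacle beyond verifying that every constant is adjusted consistently.
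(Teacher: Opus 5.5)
Your proposal is correct and follows the paper's route: the paper proves this corollary by repeating the proof of Theorem~\ref{thm3.5}, replacing the weighted H\"older estimates in (iii)$\Rightarrow$(ii) by the same two elementary $\ell^1$-type inequalities you use (for the descendant and the ancestor terms), and invoking the case $p=1$ of Lemma~\ref{lem3.1} in (ii)$\Rightarrow$(i). Your adjusted constants ($2|K|\delta/c_K<1-\alpha$, $(2+2|K|)\delta<\varepsilon$, and $M_{f,g}$ in place of $|K|^{p-1}M_{f,g}^p$) are the correct specializations, and you write out more of the bookkeeping than the paper does.
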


\begin{proof}
	The proof is the same as that of the preceding theorem. For
	\(\mathrm{(iii)}\Rightarrow\mathrm{(ii)}\), replace the weighted
	H\"older estimates by
	\[
	\left(\inf_{j\in J}\omega_j\right)
	\biggr|\sum_{j\in J}z_j\biggr|
	\leq
	\sum_{j\in J}|z_j|\omega_j
	\]
	and
	\[
	\min\left\{\omega_0,\inf_{j\in J}\omega_j\right\}
	\biggr|z_0-\sum_{j\in J}z_j\biggr|
	\leq
	|z_0|\omega_0+\sum_{j\in J}|z_j|\omega_j.
	\]
	For \(\mathrm{(ii)}\Rightarrow\mathrm{(i)}\), use the case \(p=1\) of
	Lemma~\ref{lem3.1}. The remaining argument is unchanged.
\end{proof}
\section{Some examples and conclusions}
In this section, we illustrate the preceding criteria by means of two
classes of directed metric trees. The first example concerns rooted
homogeneous trees and shows how the branching degree determines the
dynamical behavior of the corresponding translation semigroup. The second
deals with a rootless tree having a free left end and highlights the
essential role of the ancestor term in the rootless criterion.

\begin{ex}\label{ex:homogeneous-tree}
	Let \(1<p<\infty\), and let
	\[
	\mathcal F_{\mathrm{tail}}
	=
	\{A\subset\mathbb R_+:[R,\infty)\subset A
	\text{ for some }R>0\}.
	\]
	Then \(\mathcal F_{\mathrm{tail}}\) is a proper finite-invariant Furstenberg
	family.
	
For \(d\in\mathbb N\), let \(G_d\) be the rooted \(d\)-homogeneous directed
tree, that is, every vertex has exactly \(d\) children, and consider the
constant weight \(\rho_i(s)=1\). This weight is \(p\)-admissible. By
Lemma~\ref{lem2.2}, the corresponding left translation semigroup is strongly
continuous on \(L^p(L(G_d))\).
	
	If \(d\geq2\), then
	\[
	\lvert M_n(i)\rvert=d^n,
	\qquad
	\Phi_i(t,s)=d^{(1-p)n(t,s)}
	\leq d^{(1-p)\lfloor t\rfloor}.
	\]
	Hence, for every finite nonempty \(K\subset I\), as \(t\to\infty\)
	\[
	\int_0^1\sum_{i\in K}\Phi_i(t,s)\,ds
	\leq |K|d^{(1-p)\lfloor t\rfloor}\longrightarrow0.
	\]
 Therefore, for every \(\varepsilon>0\) and
	\(0<\alpha<1\), there exists \(R>0\) such that
	\[
	(R,\infty)\subset\mathcal N(K,\varepsilon,\alpha).
	\]
	Thus \(\mathcal N(K,\varepsilon,\alpha)\in\mathcal F_{\mathrm{tail}}\), and
	Theorem~\ref{thm3.1} yields that the semigroup is
	\(\mathcal F_{\mathrm{tail}}\)-transitive and topologically
	\(\mathcal F_{\mathrm{tail}}\)-recurrent.
	
	If \(d=1\), then \(\operatorname{card}M_n(i)=1\) and hence
	\(\Phi_i(t,s)=1\). For \(K=\{i\}\),
	\[
	\inf_{\substack{E\subset[0,1)\\m(E)>\alpha}}
	\int_E\Phi_i(t,s)\,ds
	=
	\inf_{\substack{E\subset[0,1)\\m(E)>\alpha}}m(E)
	=
	\alpha.
	\]
	Thus, whenever \(0<\varepsilon<\alpha\),
	\(
	\mathcal N(\{i\},\varepsilon,\alpha)=\varnothing.
	\)
	Since \(\mathcal F_{\mathrm{tail}}\) is proper, the semigroup is neither
	\(\mathcal F_{\mathrm{tail}}\)-transitive nor topologically
	\(\mathcal F_{\mathrm{tail}}\)-recurrent. 
\end{ex}

\begin{ex}
	\label{ex:free-left-end}
Let \(1<p<\infty\), \(d\geq2\). Put \(I_d=\{1,\ldots,d\}\), and let
\(I_d^n\) denote the set of all multi-indices
\(\mathbf i=(i_1,\ldots,i_n)\) with entries in \(I_d\).
Define
\[
V
=
\{v_k:k\leq0\}
\cup
\{w_{\mathbf i}:\mathbf i\in I_d^n,\ n\geq1\}
\]
by
\[
\operatorname{Chi}(v_k)=\{v_{k+1}\}\quad(k\leq-1),
\qquad
\operatorname{Chi}(v_0)=\{w_i:i\in I_d\},
\]
and
\[
\operatorname{Chi}(w_{\mathbf i})
=
\{w_{\mathbf i,r}:r\in I_d\},
\qquad
\mathbf i\in I_d^n.
\]
Thus \(G\) is a rootless directed tree with a free left end.

Define \(\ell:E\to\mathbb Z\) by
\[
\ell\bigl((v_{k-1},v_k)\bigr)=k
\quad(k\leq0),
\qquad
\ell\bigl((v_0,w_i)\bigr)=1
\quad(i\in I_d)
\]
and
\[
\ell\bigl((w_{\mathbf i},w_{(\mathbf i,r)})\bigr)=n+1,
\qquad
\mathbf i\in I_d^n,\quad r\in I_d,\quad n\geq1.
\]
	Fix
	\(
	1<q<d^{p-1}
	\)
	and define
	\[
	\rho_i(s)=q^{\ell(e_i)},
	\qquad i\in I,\quad s\in[0,1).
	\]
	Let \(m=\ell(e_i)\), \(n=n(t,s)\), and
	\(\sigma=\sigma(t,s)\). Since
	\[
	|M_n(i)|\leq d^n,
	\qquad
	\rho_j(\sigma)=q^{m+n}
	\quad (j\in M_n(i)),
	\]
	we have
	\[
	\begin{aligned}
		\biggl(
		\sum_{j\in M_n(i)}
		\rho_j(\sigma)^{-1/(p-1)}
		\biggr)^{p-1}
		\leq
		\frac{1}{\rho_i(s)}
		\biggr(\frac{d^{p-1}}{q}\biggr)^n  \leq
		\frac{C e^{(\log C)t}}{\rho_i(s)},
	\end{aligned}
	\]
	where \(C=d^{p-1}/q>1\). Hence \(\rho\) is \(p\)-admissible.
	
	For \(n>|m|\), one has
	\[
	|M_n(i)|=d^{\,n+\min\{m,0\}},
	\qquad
	|M_n(K_n(i))|=d^{\max\{m,0\}}.
	\]
	Therefore, with
	\(
	\theta=\frac{q}{d^{p-1}}\in(0,1),
	\)
	we obtain
	\[
	\Phi_i(t,s)
	=
	q^m d^{(1-p)\min\{m,0\}}\theta^n
	\]
	and
	\[
	\begin{aligned}
		\Psi_i(t,s)
		=
		q^m
		\bigl(
		q^{n/(p-1)}+d^{\max\{m,0\}}
		\bigr)^{1-p} \leq q^{m-n}.
	\end{aligned}
	\]
	Since \(n(t,s)\geq\lfloor t\rfloor\), for every finite nonempty
	\(K\subset I\) there exist \(A_K,B_K,R_K>0\) such that
	\[
	\sum_{i\in K}
	\bigl(\Phi_i(t,s)+\Psi_i(t,s)\bigr)
	\leq
	A_K\theta^{\lfloor t\rfloor}
	+
	B_Kq^{-\lfloor t\rfloor}
	\]
	for all \(t\geq R_K\) and \(s\in[0,1)\). Consequently,
	\[
	\int_0^1
	\sum_{i\in K}
	\bigl(\Phi_i(t,s)+\Psi_i(t,s)\bigr)\,ds
	\longrightarrow0
	\]
	as \(t\to\infty\). Taking \(E=[0,1)\), Theorem~\ref{thm3.5}
	implies that \((T_t)_{t\geq0}\) is
	\(\mathcal F_{\mathrm{tail}}\)-transitive and
	topologically \(\mathcal F_{\mathrm{tail}}\)-recurrent.
	
	The ancestor term cannot be omitted. Indeed, for the constant weight
	\(\rho_i=1\), which is also \(p\)-admissible, one has
	\[
	\Phi_i(t,s)\longrightarrow0,
	\qquad
	\Psi_i(t,s)
	=
	\bigl(1+d^{\max\{m,0\}}\bigr)^{1-p}
	\]
	for all sufficiently large \(t\), where \(m=\ell(e_i)\). Hence, if
	\(m(E)>\alpha\),
	\[
	\int_E
	\bigl(\Phi_i(t,s)+\Psi_i(t,s)\bigr)\,ds
	>
	\alpha
	\bigl(1+d^{\max\{m,0\}}\bigr)^{1-p}.
	\]
	Thus, for sufficiently small \(\varepsilon>0\),
	\(\mathcal N(\{i\},\varepsilon,\alpha)\notin
	\mathcal F_{\mathrm{tail}}\). This shows that forward branching alone
	does not guarantee mixing on a rootless tree with a free left end.
\end{ex}
% ------------------------------------------------------------------------

 \vspace{1em} % 调整间距
\noindent \textbf{Conflict of Interest.} The authors declare no competing interests with the journal's editors, reviewers, or readers.

\noindent \textbf{Funding.}
This work was supported by  the National Natural Science Foundation of
China (Grant Nos. 12571088) and  the Natural Science Foundation of Henan  (No.262300421862).

\noindent\textbf{Data Availability Statement.}
No datasets were generated or analyzed in this theoretical study.

% ------------------------------------------------------------------------
\end{document}